\documentclass[12pt]{amsart}

\usepackage{amsaddr}
\usepackage{amsmath}
\usepackage{amssymb}
\usepackage{amscd}
\usepackage{latexsym}
\usepackage{amsthm}
\usepackage{mathrsfs}
\usepackage{color}
\usepackage[backend=bibtex,style=alphabetic,isbn=false,url=false,maxnames=5,firstinits=true]{biblatex}
\usepackage{amsfonts}
\usepackage{enumitem}
\usepackage{array}
\usepackage{setspace}
\usepackage{tikz}
\usepackage{tikz-cd}
\usetikzlibrary{arrows}
\addbibresource{refs.bib}
\renewbibmacro{in:}{}

\usepackage{stmaryrd,graphicx}

\textheight22cm \textwidth15cm \hoffset-1.7cm \voffset-.5cm

\newtheorem{thm}{Theorem}[section]
\newtheorem*{thm*}{Theorem}
\newtheorem{cor}[thm]{Corollary}
\newtheorem*{cor*}{Corollary}
\newtheorem{lem}[thm]{Lemma}
\newtheorem*{lem*}{Lemma}
\newtheorem{prop}[thm]{Proposition}
\newtheorem*{prop*}{Proposition}

\theoremstyle{definition}
\newtheorem{defn}{Definition}[section]
\newtheorem*{defn*}{Definition}
\newtheorem*{conjecture}{Conjecture}

\theoremstyle{remark}
\newtheorem{rem}{Remark}[section]
\newtheorem*{rem*}{Remark}
\newtheorem{example}{Example}[section]

\newtheorem*{problem*}{Problem}


\newcommand{\Q}{\mathbb Q}

\newcommand{\C}{\mathbb C}

\newcommand{\BA}{\mathbf A}
\newcommand{\BB}{\mathbf B}

\newcommand{\BZ}{\mathbf Z}

\newcommand{\Z}{\mathbb Z}

\newcommand{\cP}{\mathcal P}
\newcommand{\cS}{\mathcal S}
\newcommand{\cT}{\mathcal T}

\newcommand{\HH}{\mathbb H}

\DeclareMathOperator{\pExp}{Exp}
\DeclareMathOperator{\pLog}{Log}
\DeclareMathOperator{\Aut}{Aut}
\DeclareMathOperator{\GL}{GL}

\DeclareMathOperator{\opflat}{flat}
\DeclareMathOperator{\Tr}{Tr}

\DeclareMathOperator{\Sym}{Sym}

\title{Integrality of HLV kernels}
\author{Anton Mellit}
\email{anton.mellit@univie.ac.at}
\address{Faculty of Mathematics, University of Vienna, \\
	Oskar-Morgenstern-Platz 1, 1090 Vienna, Austria}
\date{\today}
\subjclass[2010]{14H60, 05E05}
\begin{document}

\onehalfspacing

\begin{abstract}
We prove that the coefficients of the generating function of Hausel, Letellier, Villegas, and its recent generalization by Carlsson and Villegas, which according to various conjectures should compute mixed Hodge numbers of character varieties and moduli spaces of Higgs bundles of curves of genus $g$ with $n$ punctures, are polynomials in $q$ and $t$ with integer coefficients for any $g,n\geq 0$.
\end{abstract}

\maketitle

\section{Introduction}
Consider a Riemann surface of genus $g$ with $n$ punctures, $g,n\geq 0$.
To formulate the main result and to fix the notations we introduce the \emph{exponential HLV kernel} as follows:
\begin{equation}\label{eq:omegaintro}
\Omega_{u_1,u_2,\ldots, u_g}[X_1,X_2,\ldots, X_n;q,t,T]=\sum_{\lambda\in\cP} \frac{\prod_{i=1}^n \tilde H_\lambda[X_i;q,t] \prod_{i=1}^g N_{\lambda}(u_i;q,t)}{(\tilde H_\lambda, \tilde H_\lambda)^{S_{q,t}}} T^{|\lambda|},
\end{equation}
where
\[
N_\lambda(u;q,t) = \prod_{s\in \lambda} (q^{a(s)} - u t^{l(s)+1}) (q^{a(s)+1} - u^{-1} t^{l(s)}).
\]
For the purpose of this introduction, $X_i$ for each $i$ denotes an infinite set of variables $X_{i1}$, $X_{i2}$,$X_{i3}$, \ldots, and the function \eqref{eq:omegaintro} is symmetric in each set. We find it more convenient to think about $X_i$ as a formal symbol representing a generator of a free $\lambda$-ring, and this approach will be used throughout the paper. 
The summation runs over the set of all partitions, and for each partition $\lambda$ we have a product of several terms. For each puncture we take the modified Macdonald polynomial $\tilde H_\lambda[X_i;q,t]$ of \cite{garsia1999explicit}. It is a symmetric function in $X_{i1}$, $X_{i2}$,\ldots, whose coefficients are polynomials in $q$ and $t$  with integer coefficients. If $g>0$ then for each $i=1,\ldots,g$ we have an extra variable $u_i$, and an extra factor $N_\lambda(u_i;q,t)$, defined as a product over the cells of $\lambda$. It is a polynomial in $q$ and $t$ and a Laurent polynomial in $u_i$. The denominator is the modified Hall inner product with respect to the \emph{modifier} $S_{q,t}:=-(q-1)(t-1)$. This means that the scalar product is given in the power sum basis by
\begin{equation}\label{eq:intro hall}
(p_\lambda, p_\mu)^{S_{q,t}} = (p_\lambda, p_\mu) \prod_i S_{q^{\lambda_i},t^{\lambda_i}} = (p_\lambda, p_\mu) \prod_i -(q^{\lambda_i}-1)(t^{\lambda_i}-1),
\end{equation}
where $(p_\lambda, p_\mu)$ is the usual Hall inner product \cite{macdonald1995symmetric}. We have
\[
(\tilde H_\lambda, \tilde H_\lambda)^{S_{q,t}} = N_\lambda(1;q,t) \in \Z[q,t].
\]
Finally, the remaining term $T^{|\lambda|}$ is  introduced to keep track of the degrees, and is necessary for convergence if $n=0$.

Note that in the case $g=0$ we do not have the variables $u_1,\ldots,u_g$ and the factors $N_\lambda(u_i;q,t)$, so the situation is simpler.

The \emph{logarithmic HLV kernel} is the unique series $\HH_{u_1,u_2,\ldots, u_g}[X_1,X_2,\ldots, X_n;q,t,T]$ satisfying
\begin{equation}\label{eq:log}
\Omega_{u_1,u_2,\ldots, u_g}[X_1,X_2,\ldots, X_n;q,t,T] \;=\; \pExp\left[-\frac{\HH_{u_1,u_2,\ldots, u_g}[X_1,X_2,\ldots, X_n;q,t,T]}{(q-1)(t-1)}\right].
\end{equation}
A traditional way to define $\pExp$ is to set 
\[
\pExp\left[\sum_{i=1}^\infty c_i T_i\right] = \prod_{i=1}^\infty \frac{1}{(1-T_i)^{c_i}}
\]
for a sum of monomials $T_i$ in some variables, and for constants $c_i$. Then to make sense of \eqref{eq:log} we expand both sides as Laurent power series in $q$ or $t$. A more canonical way to define $\pExp$ is in the setting of $\lambda$-rings, see below.

The conjectures, formulated in \cite{hausel2011arithmetic}, \cite{mozgovoy2012solutions}, \cite{carlsson_vertex_2016} and other works can be split into two parts.
\begin{conjecture}[Part 1]
Let $g,n\geq 0$ and let $\lambda=(\lambda^{(1)}, \lambda^{(2)},\ldots,\lambda^{(n)})$ be a tuple of partitions of same size $N$.
Denote by $\HH_{u_1,u_2,\ldots,u_g, \lambda}(q,t)$ the coefficient of $\HH_{u_1,\ldots,u_g}[X_1,\ldots,X_n;q,t,T]$ in front of the monomial $\prod_{i=1}^n\prod_j X_{ij}^{\lambda^{(i)}_j} T^N$. Then
\[
\HH_{u_1,u_2,\ldots,u_g, \lambda}(q,t)\in \Z[q,t,u_1,\ldots,u_g,u_1^{-1},\ldots,u_g^{-1}].
\]
\end{conjecture}

From the definition it is not hard to deduce that $\HH_{u_1,\ldots,u_g}[X_1,\ldots,X_n;q,t,T]$ is a Laurent polynomial in $u_1,\ldots,u_n$ whose coefficients are \emph{rational functions} in $q$, $t$. So Part 1 of the Conjecture predicts that the denominators of these rational functions are trivial.

The second part of the Conjecture interprets coefficients of these polynomials as some cohomological invariants associated to character varieties or moduli spaces of Higgs bundles. Suppose $X$ is a smooth affine variety over $\C$ of dimension $d$. Consider for each $i=0,\ldots,d$ the cohomology $H^i(X,\C)$. By a construction of Deligne \cite{deligne1971hodge2}, we have a canonical weight filtration on $H^i(X,\C)$:
\[
0=W_{i-1} H^i(X,\C) \subset W_i H^i(X,\C) \subset \cdots W_{2i} H^i(X,\C)=H^i(X,\C).
\]
It is convenient to package the dimensions of the steps in the weight filtration into the \emph{mixed Hodge polynomial} as follows:
\[
 \sum_{i=0}^d \sum_{j=0}^i (-1)^i q^{\frac{d-i}{2}} t^{\frac{j}{2}} \dim \left(W_{i+j} H^i(X,\C) / W_{i+j-1} H^i(X,\C)\right)\in\Z[q^{\frac12}, t^{\frac12}].
\]
In the case of character varieties we have (\cite{hausel2011arithmetic})
\begin{conjecture}[Part 2]
Setting $u_1=u_2=\ldots=u_g=(qt)^{-\frac12}$ in $\HH_{u_1,u_2,\ldots,u_g, \lambda}(q,t)$ we obtain the mixed Hodge polynomial of the character variety of a Riemann surface of genus $g$ with $n$ punctures and generic semi-simple conjugacy classes of type $\lambda$.
\end{conjecture}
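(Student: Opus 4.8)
The plan is to establish Part 2 along the arithmetic route of Hausel--Letellier--Villegas \cite{hausel2011arithmetic}, upgraded by non-abelian Hodge theory to capture the full two-variable dependence. The first step is to spread the character variety $\mathcal M_\lambda$ of generic semisimple type $\lambda$ out to a scheme of finite type over a ring of $S$-integers, so that for almost all primes its reduction modulo $\pp$ is smooth and the count $|\mathcal M_\lambda(\FF_q)|$ is defined. By the theorem of Katz in the appendix to \cite{hausel2011arithmetic}, once this count is shown to be a polynomial in $q$, the variety is of \emph{polynomial-count} type and its $E$-polynomial --- equivalently, under the Hodge--Tate hypothesis, its weight polynomial --- is read off from that count. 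The count itself is governed by the Frobenius--Mednykh mass formula: writing $\pi_1$ of the $n$-punctured genus-$g$ surface via $2g+n$ generators and one relation, one gets, up to the standard normalization by $|\mathrm{PGL}_n(\FF_q)|$,
\[
|\mathcal M_\lambda(\FF_q)| \;=\; \frac{1}{|\mathrm{PGL}_n(\FF_q)|}\sum_{\chi\in\mathrm{Irr}\,\GL_n(\FF_q)}\left(\frac{|\GL_n(\FF_q)|}{\chi(1)}\right)^{2g-2+n}\prod_{i=1}^n |C_i|\,\chi(c_i),
\]
where $C_i$ is the generic semisimple class of type $\lambda^{(i)}$ with representative $c_i$.

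The second step is to recognize the character sum above as the specialization $u_i=-(qt)^{-\frac12}$ of the coefficient $\HH_{u_1,\ldots,u_g,\lambda}$. This uses Green's parametrization of $\mathrm{Irr}\,\GL_n(\FF_q)$ and the symmetric-function dictionary of \cite{hausel2011arithmetic}: the denominator $(\tilde H_\lambda,\tilde H_\lambda)^S = N_\lambda(1;q,t)$ reproduces the dimension factor built from $\chi(1)$, the modified Macdonald polynomials $\tilde H_\lambda[X_i;q,t]$ package the Green-function expansion of the class values $\chi(c_i)$ in the variables attached to the $i$-th puncture, and the genus factors $N_\lambda(u_i;q,t)\big|_{u_i=-(qt)^{-1/2}}$ match the powers $(|\GL_n(\FF_q)|/\chi(1))^{2g}$ --- the evaluation $u=-(qt)^{-\frac12}$ being exactly the one turning $N_\lambda(u;q,t)$ into the relevant hook product. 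Summing over the combinatorial type across all punctures yields the \emph{groupoid} count $\Omega$, and the passage \eqref{eq:log} from $\Omega$ to $\HH$ via $\pLog$, with its $(q-1)(t-1)$ normalization, is the inclusion--exclusion isolating the connected (absolutely irreducible) count. At this stage one has matched the weight polynomial of $\mathcal M_\lambda$, i.e. the $q$-shadow of the desired mixed Hodge polynomial.

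The main obstacle is the refinement that introduces the variable $t$: the arithmetic above only sees the alternating sum over cohomological degree, hence the $E$-polynomial, whereas the conjecture asserts equality of the full mixed Hodge polynomial. To recover $t$ one must prove the structural properties predicted by the formula --- that $H^\bullet(\mathcal M_\lambda)$ is of Hodge--Tate type and \emph{pure} in a curious sense (curious hard Lefschetz), so that each $H^k$ is concentrated in a single weight and no cancellation occurs in the alternating sum. I would attack this through the non-abelian Hodge correspondence, which makes $\mathcal M_\lambda$ diffeomorphic to a smooth moduli space of (parabolic, generically stable) Higgs bundles carrying the Hitchin fibration, together with the $P=W$ identification of the weight filtration on the Betti side with the perverse Leray filtration of the Hitchin map on the Dolbeault side. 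Under this identification the variable $t$ becomes the perverse grading, purity follows from relative hard Lefschetz and the decomposition theorem, and the perverse-filtered Poincar\'e series can in principle be computed and matched to $\HH|_{u=-(qt)^{-1/2}}$ by cohomological Hall algebra techniques. Proving $P=W$ in this generic/parabolic setting and carrying out the perverse-degree bookkeeping so that it agrees cell-by-cell with the Macdonald combinatorics is by far the hardest part, and is where the bulk of the work --- and the dependence on deep recent results --- would lie.
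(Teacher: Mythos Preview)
The paper does \emph{not} prove this statement: Part~2 is explicitly presented as a conjecture, attributed to \cite{hausel2011arithmetic}, and the paper's main result (Corollary~\ref{cor:main}) is only Part~1, the integrality of the coefficients. So there is no ``paper's own proof'' to compare against; your proposal is an attempt to settle an open problem that the paper leaves open.

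As for the proposal itself, it is a reasonable outline of the expected strategy, but it is not a proof. You correctly identify that the arithmetic route of \cite{hausel2011arithmetic} only yields the $E$-polynomial (the $q=t$ specialization, up to normalization), and that upgrading this to the full mixed Hodge polynomial requires a structural input --- essentially a purity/curious hard Lefschetz statement --- which you propose to obtain via $P=W$. But you then concede that establishing $P=W$ in the generic parabolic setting ``is by far the hardest part'' and leave it as something to be done. That is the gap: the entire weight of the conjecture rests on exactly the step you do not carry out. Without it, what remains is the known match of $E$-polynomials from \cite{hausel2011arithmetic}, not a proof of Part~2.
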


The main result of this paper is a proof of Part 1 of the Conjecture (Corollary \ref{cor:main}). The proof is combinatorial. There are two main techniques developed in this paper, which we outline now.

\subsection{Admissibility}
The first technique centers around the notion of admissibility, similar to the one of \cite{kontsevich2010soibelman}. A general formulation is postponed until Section \ref{sec:modifiers}, but we present a simplified version to illustrate the main idea here. Let $C$ be a power series in some set of variables whose coefficients are rational functions in $q$ and $t$. Suppose the constant term of $C$ is $1$. Then we can write $C$ in the form
\[
C=\pExp\left[-\frac{L}{(q-1)(t-1)}\right],
\]
where $L$ is a power series with constant term $0$ whose coefficients are rational functions in $q$ and $t$. 
\begin{defn}
We call $C$ \emph{admissible} if the constant term of $C$ is $1$ and the coefficients of $L$ are polynomials in $q$ and $t$.
\end{defn}

We prove that the admissibility is closed under the following operation. Suppose $C[X,Y,Z]$ is a power series that depends as a symmetric function on two sets of variables $X$, $Y$, and additionally on some other variables, which we denote by $Z$. Expand $C$ as follows:
\[
C[X,Y,Z] = 1+\sum_i F_i[X] G_i[Y] H_i[Z],
\]
where $F_i$, $G_i$ are symmetric functions. Each function $H_i[Z]$ can be, for instance, a power series in several sets of variables whose coefficients are Laurent polynomials in $u_1,\ldots,u_g$ whose coefficients are rational functions in $q,t$. Consider ``the integral''
\begin{equation}\label{intC FGH}
\int^S_X C[X, X^*, Z]:=1+\sum_i (F_i, G_i)^S H_i[Z],
\end{equation}
where $(F_i, G_i)^S$ is defined in \eqref{eq:intro hall} for $S=S_{q,t}=-(1-q)(1-t)$, or, more generally, for an arbitrary \emph{good modifier} $S$ (Definition \ref{defn:goodmodifier}) in Section \ref{sec:modifiers}. An infinite sum of power series does not always produce a well-defined power series. So we assume that for each $d$ only finitely many $H_i[Z]$ contain monomials of degree $\leq d$. Then only finitely many terms contribute to any given coefficient of \eqref{intC FGH} and the sum does make sense. Suppose that the constant term of each $H_i[Z]$ is $0$. Then the constant term of $\eqref{intC FGH}$ is $1$. We show that if $C[X,Y,Z]$ is admissible and $S$ is a good modifier, then \eqref{intC FGH} is also admissible. For details, see Lemma \ref{lem:composition} and Theorems \ref{thm:composition}, \ref{thm:trace}. The proof is constructive in the following sense. Write
\[
C[X, Y, Z]=\pExp\left[-\frac{L[X,Y, Z]}{(q-1)(t-1)}\right],
\]
\[
\int^S_X C[X, X^*,Z] = \pExp\left[-\frac{L_{\int}[Z]}{(q-1)(t-1)}\right].
\]
We give a formula for the coefficients of $L_{\int}[Z]$ in terms of the coefficients of $L[X,Y,Z]$ as a certain sum over graphs, similar to the Feynman diagram decompositions in physics. Only finitely many graphs contribute to any given coefficient of $L_{\int}{[Z]}$, and we verify that the contribution of each graph is a polynomial, so the result is a  polynomial in $q$ and $t$. This formula is established first for the unmodified Hall inner product in a simpler case, corresponding to the situation when $C[X,Y,Z]$ can be factored as $C_1[X,Z]C_2[Y,Z]$ (Theorem \ref{thm:logconvolution}). Then we show how the arguments need to be modified in the general case (Theorem \ref{thm:logconvolution2}), and analyse how introduction of the modifier $S$ affects the result in Lemma \ref{lem:composition}.

An important special case of the above picture arises as follows:
\begin{defn}
Let $D$ be a linear operator on the space of symmetric functions in one set of variables over the ring of rational functions over $q,t$.
The \emph{kernel} of $D$ is the following power series in two sets of variables $X,Y$:
\[
K_D^S = D \pExp\left[\frac{XY}{S}\right],
\]
where we apply $D$ to functions in $X$. An operator $D$ is called admissible if its kernel $K_D^S$ is admissible 
\end{defn}
From the Cauchy formula \cite{macdonald1995symmetric}
\[
\pExp\left[\frac{XY}{S}\right] = \sum_{\lambda\in\cP} \frac{p_\lambda[X] p_\lambda[Y]}{(p_\lambda, p_\lambda)^S},
\]
it is easily deduced that if $F$ is a symmetric function, then $DF$ can be written as follows:
\[
(DF)[X] = \int_{Y}^S K_D^S[X, Y] F[Y^*].
\]
Therefore, if we apply an admissible operator to an admissible function, we obtain an admissible function.

\subsection{Admissibility of $\nabla$}
The second main ingredient is the proof of $S_{q,t}$-admissibility of our version of the operator $\nabla$ of \cite{bergeron1999science}, \cite{bergeron1999identities}, which is obtained from the original one by a simple sign change \eqref{eq:nabladef}. Admissibility of $\nabla$ is deduced from admissibility of the operator $\Delta_v$, also introduced in \cite{bergeron1999identities}. We prove admissibility of $\Delta_v$ (Theorem \ref{thm:admdelta}) by showing that $L_v[X, Y]$ defined by
\[
\Delta_v \pExp\left[-\frac{XY}{(q-1)(t-1)}\right] = \pExp\left[-\frac{L_v[X,Y]}{(q-1)(t-1)}\right]
\]
can be computed by a certain recursion, which follows from a recently established identity satisfied by $\Delta_v$ (see \cite{garsia2016five}).

Finally, we show that the operation of integration \eqref{intC FGH} allows us to ``build $\Omega$'' from $\nabla$ in a sequence of steps. Thus our main goal, which is the admissibility of $\Omega_{u_1,\ldots,u_g}[X_1,\ldots,X_n;q,t,T]$, is deduced from admissibility of $\nabla$ and the fact that each step preserves admissibility (Section \ref{sec:HLV}).

\begin{rem}
We finish this introduction by stating a simple special case of a fact, crucial in the proof of Lemma \ref{lem:composition} and Theorems \ref{thm:composition}, \ref{thm:trace}, which we find interesting on its own. Let $\Gamma=(V,E)$ be a connected graph with Betti number $b(\Gamma)=1 + \# E - \# V$. Let $m:V\cup E\to\Z_{>0}$ be a coloring of the vertices and edges of $E$ by positive integers. Suppose $m(v)|m(e)$ for each edge $e\in E$ incident to a vertex $v\in V$. In the case when $\Gamma$ is a tree we further assume that the g.c.d. of the numbers $m(e)$ ($e\in E$), $m(v)$ ($v\in V$) is $1$. Then we have
\[
(q-1) \frac{\prod_{e\in E} (q^{m(e)}-1)}{\prod_{v\in V} (q^{m(v)}-1)} \in (q-1)^{b(\Gamma)} \Z[q].
\]
\end{rem}

I thank Fernando Rodriguez Villegas and Erik Carlsson for useful discussions and the anonymous referees for suggestions on improving the exposition. This research was performed during my stay at SISSA, and partly at ICTP, Trieste, to which I am grateful for support and stimulating environment.

\section{Notations}
The ring of symmetric functions in infinitely many variables over $\Q$ is denoted as $\Sym$. A \emph{$\lambda$-ring} is a ring $\Lambda$ with an action of $\Sym$
\[
F[G]\quad (F\in\Sym,\; G\in \Lambda),
\]
 such that the power sums $p_n$ act by ring endomorphisms for all $n\in\Z_{>0}$ and 
\[
p_1[G]=G,\quad p_m[p_n[G]]=p_{mn}[G]\quad(G\in \Lambda,\; m,n\in\Z_{>0}).
\]
Free $\lambda$-rings with generators $X_1, X_2, \ldots$ will be denoted as $\Sym[X_1,X_2,\ldots]$. Note that $\Sym[X]$ and $\Sym$ is the same thing. We denote by 
$\Sym[X_1,X_2,\ldots]^+$ the ideal consisting of symmetric functions with no constant term, by $\Sym[[X_1,X_2,\ldots]]$ the completion of $\Sym[X_1,X_2,\ldots]$ with respect to this ideal, and by $\Sym[[X_1,X_2,\ldots]]^+$ the corresponding ideal in $\Sym[[X_1,X_2,\ldots]]$.

The set of partitions is denoted by $\cP$. The set of non-empty partitions is denoted by $\cP^*$. For any $\lambda\in\cP$ we write 
\[
\lambda=(\lambda_1, \lambda_2, \ldots, \lambda_{l(\lambda)}) \quad
(\lambda_1\geq \lambda_2 \geq \ldots \geq \lambda_{l(\lambda)}>0,\; l(\lambda)\geq 0).
\]
The \emph{size} of $\lambda$ is
\[
|\lambda| = \sum_{i=1}^{l(\lambda)} \lambda_i,
\]
the set of all partitions of size $n$ is
\[
\cP_n = \{\lambda\in\cP:\, |\lambda|=n\}.
\]
Denote
\[
\Aut(\lambda) = \{\pi\in S_{l(\lambda)}:\; \lambda_{\pi(i)}=\lambda_i\;\text{for all $1\leq i\leq l(\lambda)$}\}.
\]
For a partition $\lambda$ and a constant $c$ we denote
\[
c\lambda = (c \lambda_1, c \lambda_2,\ldots,c\lambda_{l(\lambda)}).
\]

In general, we will often assume our $\lambda$-ring $\Lambda$ is complete with respect to some decreasing filtration $\Lambda=J^0\supset J^1\supset\cdots$ satisfying $J^i J^{i'}\subset J^{i+i'}$ for each $i,i'\geq 0$. We usually do not specify the filtration, so that the exposition is less cluttered, and hope that it is obvious from the context. Tensor product of such rings means \emph{completed tensor product}. We call a  sequence $A_1, A_2, \ldots \in\Lambda$ \emph{well-behaved} if $p_\lambda[A_i]$ tends to $0$ when $i,|\lambda|$ tend to $\infty$. Equivalently, for each $k$ there exists $i_0$ such that $p_\lambda[A_i]\in J^k$ if $i>i_0$ or $|\lambda|>i_0$. A single element $X$ is well-behaved if the sequence $X, 0, 0,\ldots$ is. A series is well-behaved if the sequence of its coefficients is.

We often consider operators from $\Lambda\otimes \Sym[X]$ to $\Lambda\otimes \Sym[[X]]$. In such a situation we say that an operator is \emph{finite} if its image is in $\Lambda\otimes \Sym[X]$, and we say it is \emph{continuous} if it extends to a continuous operator from $\Lambda\otimes \Sym[[X]]$ to $\Lambda\otimes \Sym[[X]]$. Note that two such operators $U$, $V$ can be composed to form a new operator $U V$ if $V$ is finite or $U$ is continuous. We then say that $U$ and $V$ are \emph{composable}.

\emph{Plethystic exponential} is the following expression, well-defined if $G$ is well-behaved:
\[
\pExp[G] = \sum_{n=0}^\infty h_n[G] = \exp\left( \sum_{n=1}^\infty \frac{p_n[G]}n\right),
\]
where $h_n$ denotes the complete homogeneous symmetric function, and $p_n$ denotes the power sum of degree $n$. The inverse operation is given as follows:
\[
\pLog[1+G] = \sum_{n=1}^\infty \frac{\mu(n)}{n} p_n\left[\log(1+G)\right],
\]
where $\mu$ is the M\"obius function.

We also use $e_n$ for the elementary symmetric function of degree $n$, $m_\lambda$ for the monomial symmetric function, and set
\[
p_\lambda = \prod_{i=1}^{l(\lambda)} p_{\lambda_i}, \;h_\lambda = \prod_{i=1}^{l(\lambda)} h_{\lambda_i},\;e_\lambda = \prod_{i=1}^{l(\lambda)} e_{\lambda_i} \quad(\lambda\in\cP).
\]
\section{Convolution formula}\label{sec:convolution}
We consider the free $\lambda$-ring $\Lambda_{A,B}$ over $\Q$ with generators
\[
\{A_\lambda, B_\lambda\}_{\lambda\in\cP^*},
\]
bi-graded by putting $A_\lambda$, $B_\lambda$ in degrees $(|\lambda|, 0)$ and $(0, |\lambda|)$ respectively.
Consider the following two series in the appropriate completion of $\Lambda_{\BA,\BB}[X]$, the free $\lambda$-ring generated by $X$ and the generators of $\Lambda_{\BA,\BB}$:
\begin{equation}\label{eq:defnAB}
\BA[X] := \sum_{\lambda\in\cP^*} A_\lambda p_\lambda[X],\;
\BB[X] := \sum_{\lambda\in\cP^*} B_\lambda p_\lambda[X].
\end{equation}
We are interested in the Hall inner product of their plethystic exponentials with respect to the $X$ variable
\begin{equation}\label{eq:CAB}
C(\BA, \BB):= (\pExp[\BA[X]], \pExp[\BB[X]])_X = \sum_{n=0}^\infty C_n(\BA, \BB),
\end{equation}
where $C_n(\BA, \BB)\in \Lambda_{\BA,\BB}$ is homogeneous of bi-degree $(n,n)$ for each $n\geq 0$, $C_0(\BA,\BB)=1$. Note that the series $\BA[X]$ and $\BB[X]$ are well-behaved, so that the plethystic exponentials and the Hall inner product make sense. In fact, each $C_n(\BA,\BB)$ is an explicit polynomial in symbols of the form $p_k[A_\lambda]$ and $p_k[B_\lambda]$ for $k\in\Z_{>0}$, $\lambda\in\cP$.  The problem we are going to solve is to compute $L(\BA,\BB)$
\[
L(\BA, \BB) = \sum_{n=1}^\infty L_n(\BA, \BB),
\]
with each $L_n(\BA, \BB)\in \Lambda_{\BA,\BB}$ homogeneous of bi-degree $(n,n)$, such that
\begin{equation}\label{eq:LAB}
C(\BA, \BB) = \pExp[L(\BA, \BB)].
\end{equation}
This will clearly solve the problem of computing the Hall scalar product of arbitrary two well-behaved series of the form $\pExp[\cdots]$, and representing the result again in the same form.

\subsection{Expansion of $\pExp$ into types.}
\begin{defn}\label{defn:type}
A \emph{type}\footnote{We borrow the word ``type'' from \cite{hausel2011arithmetic}, is seems to be an abbreviation of ``$\GL_n$-type''. These classify combinatorial types of conjugacy classes and representations of $\GL_n$ over a finite field.} is an unordered tuple of pairs of the form $(k, \lambda)$, where $\lambda\in\cP^*$, $k\in\Z_{>0}$. The set of all types is denoted by $\cT$. More explicitly, choose a total ordering $\leq$ on the set of all pairs $(k, \lambda)$ as above. Then a type is given by a sequence
\[
\tau=\{(k_i, \lambda^{(i)})\}_{i=1}^r\quad ((k_1, \lambda^{(1)}) \leq (k_2, \lambda^{(2)}) \leq \cdots \leq (k_r, \lambda^{(r)}),\; r\in\Z_{\geq 0}).
\]
 The \emph{size} of a type $\tau=\{(k_i, \lambda^{(i)})\}_{i=1}^r$ is the sum
\[
|\tau| = \sum_{i=1}^r k_i |\lambda^{(i)}|.
\] 
The set of all types of size $n$ is denoted by $\cT_n$. The group of automorphisms of $\tau$ is defined as 
\[
\Aut(\tau) = \{\pi\in S_r:\, k_{\pi(i)} = k_i,\; \lambda^{(\pi(i))} = \lambda^{(i)}\;\text{for all $1\leq i \leq r$}\}.
\]
The partition $\opflat(\tau)$ is defined as the partition with components $k_i \lambda^{(i)}_j$, where $i, j$ run over $i=1,2,\ldots, r$, $j=1,2,\ldots, l(\lambda^{(i)})$.
\end{defn} 
For any type $\tau=\{(k_i, \lambda^{(i)})\}_{i=1}^r$ and a collection of elements $\BZ=\{Z_\lambda\}_{\lambda\in\cP^*}$ in some $\lambda$-ring we denote
\[
\BZ^\tau = \prod_{i=1}^r p_{k_i}[Z_{\lambda^{(i)}}].
\]
If $X$ is a single element of a $\lambda$-ring, we set $X_\lambda=p_\lambda[X]$ and define $X^\tau$ as above. We have
\[
X^\tau = \prod_{i=1}^r p_{k_i}[p_{\lambda^{(i)}}[X]] = p_{\opflat(\tau)}[X].
\]

The element $\BA^{\tau} \BB^{\tau'}$ for $\tau, \tau'\in\cT$ has bi-degree $(|\tau|, |\tau'|)$, and these elements form a basis of $\Lambda_{\BA,\BB}$. The expressions $L(\BA, \BB)$ and $C(\BA, \BB)$ can be expanded as follows:
\[
C(\BA, \BB) = \sum_{\tau, \tau' \in\cT} C_{\tau, \tau'} \BA^\tau \BB^{\tau'},\quad L(\BA, \BB) = \sum_{\tau, \tau' \in\cT} L_{\tau, \tau'} \BA^\tau \BB^{\tau'},
\]
\[
C_{\tau, \tau'}, L_{\tau, \tau'}\in \Q.
\]

First we show how to expand $\pExp$ in the following
\begin{prop}\label{prop:Exp_expansion}
We have
\[
\pExp\left[\sum_{\lambda\in\cP^*} A_\lambda \right] = \sum_{\tau\in\cT} g_\tau \BA^\tau,
\]
where for each type $\tau=\{(k_i, \lambda^{(i)})\}_{i=1}^r$ 
\[
g_\tau  = \frac{1}{\#\Aut(\tau) \; \prod_{i=1}^r k_i}. 
\]
\end{prop}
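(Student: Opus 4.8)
The plan is to exploit the multiplicativity of $\pExp$ together with the additivity of the power sums, reducing the entire computation to a single generator followed by purely combinatorial bookkeeping of automorphisms. Everything takes place in the completion of $\Lambda_{A,B}[\cdots]$ with respect to the grading, where $\sum_{\tau} g_\tau A^\tau$ is well defined because there are only finitely many types of each degree.

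First I would use the definition $\pExp[Y]=\exp\left(\sum_{n=1}^\infty \frac{p_n[Y]}{n}\right)$ and the $\lambda$-ring axiom that each $p_n$ acts as a ring endomorphism, hence is additive. Writing $Y=\sum_\lambda A_\lambda$, this gives $p_n[Y]=\sum_\lambda p_n[A_\lambda]$, and since the exponential of a convergent sum factors as a product of exponentials,
\[
\pExp\left[\sum_\lambda A_\lambda\right]=\exp\left(\sum_\lambda\sum_{n=1}^\infty\frac{p_n[A_\lambda]}{n}\right)=\prod_\lambda \pExp[A_\lambda].
\]
This factorization is the key structural step: it reduces the problem to understanding one factor $\pExp[A_\lambda]$ and then reassembling.

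Next I would expand a single factor. Treating the $p_n[A_\lambda]$ as commuting elements,
\[
\pExp[A_\lambda]=\prod_{n=1}^\infty\exp\left(\frac{p_n[A_\lambda]}{n}\right)=\prod_{n=1}^\infty\sum_{m_n\geq 0}\frac{1}{m_n!\,n^{m_n}}\,p_n[A_\lambda]^{m_n}.
\]
A monomial here is indexed by a multiset of positive integers, i.e.\ by a partition $\mu$ having $m_n=m_n(\mu)$ parts equal to $n$, and its coefficient is $\prod_n \frac{1}{m_n!\,n^{m_n}}=\frac{1}{z_\mu}$, the reciprocal of $z_\mu=\prod_n n^{m_n}m_n!$. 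By the paper's convention $\prod_n p_n[A_\lambda]^{m_n}=A^{\tau_\lambda}$, where $\tau_\lambda$ is the type consisting of $m_n$ copies of the pair $(n,\lambda)$ for each $n$.

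Finally I would take the product over all $\lambda$. A monomial in the full product is a choice of one partition $\mu^{(\lambda)}$ for each $\lambda$; these assemble into a single type $\tau$ whose pairs $(k,\lambda)$ occur with multiplicity $m_{k,\lambda}:=m_k(\mu^{(\lambda)})$, the product of monomials being exactly $A^\tau$, with coefficient $\prod_\lambda \frac{1}{z_{\mu^{(\lambda)}}}=\prod_{(k,\lambda)}\frac{1}{k^{m_{k,\lambda}}\,m_{k,\lambda}!}$. To conclude I match this against $g_\tau$: the product $\prod_i k_i$ over the $r$ pairs of $\tau$ equals $\prod_{(k,\lambda)}k^{m_{k,\lambda}}$, while $\Aut(\tau)$, which permutes identical pairs independently, factors as $\prod_{(k,\lambda)}S_{m_{k,\lambda}}$, so $\#\Aut(\tau)=\prod_{(k,\lambda)}m_{k,\lambda}!$. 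Thus the coefficient is precisely $\frac{1}{\#\Aut(\tau)\prod_i k_i}=g_\tau$. The routine parts are the two geometric-series expansions; the step I expect to require the most care — the main obstacle — is the combinatorial bookkeeping of this last paragraph, namely checking that distinct monomials across the product correspond bijectively to distinct types (no overcounting) and that the automorphism count factors as the stated product of factorials, together with confirming that well-behavedness legitimizes the infinite product and the regrouping of terms.
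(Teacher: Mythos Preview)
Your proof is correct and follows exactly the approach the paper intends: the paper's own proof is the single sentence ``Simply use the expansion of $\pExp$ into the power sums,'' and your argument is precisely a careful unpacking of that sentence, factoring $\pExp$ over the generators via additivity of $p_n$ and then doing the $z_\mu$ bookkeeping to identify the coefficient with $g_\tau$. There is nothing to add.
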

\begin{proof}
Simply use the expansion of $\pExp$ into the power sums.
\end{proof}

We compute $C_{\tau', \tau'}$ first.
\begin{prop}\label{prop:exptypes}
Let $\tau=\{(k_i, \lambda^{(i)})\}_{i=1}^r$, $\tau'=\{(k_i', \lambda'^{(i)})\}_{i=1}^{r'}$ be types with $|\tau|=|\tau'|$. The number $C_{\tau, \tau'}$ is given by
\[
C_{\tau, \tau'} = \begin{cases}
\frac{\#\Aut(\opflat(\tau)) \prod_{i=1}^{r} \prod_{j=1}^{l(\lambda^{(i)})} k_i \lambda^{(i)}_j }{\#\Aut(\tau) \#\Aut(\tau') \; \prod_{i=1}^r k_i \; \prod_{i=1}^{r'} k_i'} & \text{if $\opflat(\tau)=\opflat(\tau')$,}\\
0 & \text{otherwise.}
\end{cases}
\]
\end{prop}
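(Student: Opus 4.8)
The plan is to reduce the computation of $C_{\tau,\tau'}$ to the orthogonality of the power sums under the Hall pairing, after expanding each plethystic exponential into types by means of Proposition \ref{prop:Exp_expansion}. The point is that once both factors of $C(A,B) = (\pExp[A[X]], \pExp[B[X]])_X$ are written as $\Lambda_{A,B}$-linear combinations of power sums $p_\mu[X]$, the pairing acts only on the $X$-variables, so the coefficient of $A^\tau B^{\tau'}$ is forced to be diagonal in the partition $\opflat$.

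First I would expand $\pExp[A[X]]$, where $A[X] = \sum_{\lambda\in\cP^*} A_\lambda p_\lambda[X]$. Applying Proposition \ref{prop:Exp_expansion} with the role of the generators played by the elements $A_\lambda p_\lambda[X]$, $\lambda\in\cP^*$, gives
\[
\pExp[A[X]] = \sum_{\tau\in\cT} g_\tau \prod_{i=1}^r p_{k_i}\bigl[A_{\lambda^{(i)}} p_{\lambda^{(i)}}[X]\bigr], \qquad \tau = \{(k_i,\lambda^{(i)})\}_{i=1}^r.
\]
Since each $p_{k_i}$ acts by a ring endomorphism of the $\lambda$-ring, it is multiplicative, so $p_{k_i}[A_{\lambda^{(i)}} p_{\lambda^{(i)}}[X]] = p_{k_i}[A_{\lambda^{(i)}}]\, p_{k_i}[p_{\lambda^{(i)}}[X]]$, and the defining relation $p_m[p_n[X]] = p_{mn}[X]$ gives $p_{k_i}[p_{\lambda^{(i)}}[X]] = p_{k_i\lambda^{(i)}}[X]$. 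Collecting the $X$-factors and using $\prod_{i} p_{k_i\lambda^{(i)}}[X] = p_{\opflat(\tau)}[X]$ yields
\[
\pExp[A[X]] = \sum_{\tau\in\cT} g_\tau\, A^\tau\, p_{\opflat(\tau)}[X],
\]
and identically $\pExp[B[X]] = \sum_{\tau'\in\cT} g_{\tau'}\, B^{\tau'}\, p_{\opflat(\tau')}[X]$. The main care needed here is to check that Proposition \ref{prop:Exp_expansion} applies verbatim when its abstract generators are replaced by the plethystic products $A_\lambda p_\lambda[X]$; this is exactly what the ring-endomorphism property of $p_{k_i}$ guarantees, and it is the only genuinely structural step.

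Next I would take the Hall pairing in the $X$-variables, which is $\Lambda_{A,B}$-bilinear because the $A_\lambda$, $B_\lambda$ are scalars for this pairing. Using $(p_\mu, p_\nu)_X = \delta_{\mu\nu}\, z_\mu$ we obtain
\[
C(A,B) = \sum_{\tau,\tau'\in\cT} g_\tau g_{\tau'}\, (p_{\opflat(\tau)}, p_{\opflat(\tau')})_X\, A^\tau B^{\tau'} = \sum_{\opflat(\tau)=\opflat(\tau')} g_\tau g_{\tau'}\, z_{\opflat(\tau)}\, A^\tau B^{\tau'}.
\]
Every summand is homogeneous of bidegree $(|\tau|,|\tau'|)$, and the contributing pairs satisfy $|\tau| = |\opflat(\tau)| = |\opflat(\tau')| = |\tau'|$, so the sum is finite in each bidegree and there is no convergence issue. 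Reading off the coefficient gives $C_{\tau,\tau'} = g_\tau g_{\tau'}\, z_{\opflat(\tau)}$ when $\opflat(\tau) = \opflat(\tau')$, and $0$ otherwise.

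Finally I would substitute the explicit values. For any partition $\mu$ with $m_j$ parts equal to $j$ one has $z_\mu = \prod_j j^{m_j} m_j! = \bigl(\prod_j \mu_j\bigr)\,\#\Aut(\mu)$, since $\#\Aut(\mu) = \prod_j m_j!$ and the product of all parts is $\prod_j j^{m_j}$. Applying this to $\mu = \opflat(\tau)$, whose parts are the numbers $k_i\lambda^{(i)}_j$, gives $z_{\opflat(\tau)} = \#\Aut(\opflat(\tau))\prod_{i=1}^r\prod_{j=1}^{l(\lambda^{(i)})} k_i\lambda^{(i)}_j$. Combining this with $g_\tau = (\#\Aut(\tau)\prod_i k_i)^{-1}$ and $g_{\tau'} = (\#\Aut(\tau')\prod_i k_i')^{-1}$ from Proposition \ref{prop:Exp_expansion} reproduces the stated formula. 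I do not expect a real obstacle beyond bookkeeping; the only places to be attentive are matching the plethystic generators against Proposition \ref{prop:Exp_expansion} and the elementary identity $z_\mu = \#\Aut(\mu)\prod_j\mu_j$ relating the power-sum norm to the automorphism count.
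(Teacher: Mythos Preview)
Your argument is correct and follows exactly the same approach as the paper: expand $\pExp[A[X]]$ and $\pExp[B[X]]$ into types via Proposition~\ref{prop:Exp_expansion}, then use the orthogonality formula $(p_\lambda,p_{\lambda'}) = \delta_{\lambda,\lambda'}\,\#\Aut(\lambda)\prod_i\lambda_i$ for power sums. You have simply filled in the details that the paper leaves implicit.
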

\begin{proof}
Using Proposition \ref{prop:Exp_expansion} we write 
\[
\pExp[\BA[X]] = \sum_{\tau\in\cT} g_\tau \BA^\tau p_{\opflat(\tau)}[X],
\]
and similarly for $\BB[X]$. Then use the formula
\[
(p_\lambda, p_{\lambda'}) = \begin{cases}
\#\Aut(\lambda) \prod_{i=1}^{l(\lambda)} \lambda_i & \text{if $\lambda=\lambda'$,}\\
0 & \text{otherwise.}
\end{cases}
\]
\end{proof}

\subsection{Graph interpretation}
Suppose $\tau=\{(k_i, \lambda^{(i)})\}_{i=1}^r$, $\tau'=\{(k_i', \lambda'^{(i)})\}_{i=1}^{r'}$ are such that $\opflat(\tau)=\opflat(\tau')$. Note that $\#\Aut(\opflat(\tau))$ is the number of bijections $\varphi$ from the set 
\[
W=\{(i,j): 1\leq i\leq r,\; 1\leq j \leq  l(\lambda^{(i)})\}
\]
to the set
\[
W'=\{(i,j): 1\leq i\leq r',\; 1\leq j \leq  l(\lambda'^{(i)})\}
\]
satisfying $k_i \lambda_j^{(i)} = k'_{i'} \lambda'^{(i')}_{j'}$ whenever $\varphi(i,j) = (i',j')$. We represent such a bijection by a bipartite graph with multi-edges as follows. The vertex set is the disjoint union of two sets $V=V_1\sqcup V_2$ where $V_1=\{1,2,\ldots,r\}$ and $V_2=\{1,2,\ldots,r'\}$,
i.e. vertices correspond to the pairs $(k_i, \lambda^{(i)})$ in $\tau$ and $\tau'$. Vertices are colored by positive integers. The colors of $i\in V_1$ resp. $i'\in V_2$ are given by $k_i$ resp. $k_{i'}'$. The edges are also colored. We connect $i\in V_1$ to $i'\in V_2$ by an edge with color $k_i \lambda_j^{(i)}$ for each pair $j,j'$ such that $\varphi(i,j)=(i',j')$. 
The result is a bipartite graph with multi-edges $(V, E, m)$, where $m(v)$ resp. $m(e)$ denotes the color of vertex $v\in V$ resp. edge $e\in E$. The graph and the coloring satisfy the following condition:
\begin{defn}
A colored bipartite graph $(V, E, m)$ with coloring $m:V\cup E\to \Z_{>0}$ is called \emph{admissible} if for each edge $e$ adjacent to a vertex $v$ we have $m(v)| m(e)$. We write admissible$^*$ to denote the additional requirement that the graph has no isolated vertices.
\end{defn}
Note that in our construction all admissible graphs without isolated vertices appear.

For example, consider two types $\tau=((2, (3)), (3, (2,2)))$ and $\tau'=((2, (3,3)), (3,(2)))$. We have $\opflat(\tau)=\opflat(\tau')=(6,6,6)$. There are $\#\Aut(\opflat(\tau))=6$ bijections. We have $W=((1,1), (2,1), (2,2))$ and $W'=((1,1), (1,2), (2,1))$. For instance, for the bijection $(1,1)\to (1,1)$, $(2,1)\to (1,2)$, $(2,2)\to (2,1)$ we obtain the following graph with $V_1=\{1,2\}$, $V_2=\{1,2\}$:
\[
\begin{tikzpicture}[scale=2,baseline=-.1cm] 
\draw (0,0.5)node[draw,circle, inner sep=4pt,fill=white] {$2$}  to node[midway,above] {$6$} (1.3,0.5);
\draw (0,-0.5) to  node[midway,above] {$6$} (1.3,0.5) node[draw,circle, inner sep=4pt,fill=white] {$2$};
\draw (0,-0.5) node[draw,circle, inner sep=4pt,fill=white] {$3$} to  node[midway,above] {$6$} (1.3,-0.5) node[draw,circle, inner sep=4pt,fill=white] {$3$};
\draw (0,0.9) node {$V_1$};
\draw (1.3,0.9) node {$V_2$};
\end{tikzpicture}
\]
There are other bijections producing the same graph, for instance $(1,1)\to (1,2)$, $(2,1)\to (1,1)$, $(2,2)\to (2,1)$. On the other hand, the bijection $(1,1)\to (2,1)$, $(2,1)\to (1,1)$, $(2,2)\to (1,2)$ produces a different graph:
\[
\begin{tikzpicture}[scale=2,baseline=-.1cm] 
\draw (0,0.5)node[draw,circle, inner sep=4pt,fill=white] {$2$}  to node[near end,below left] {$6$} (1.3,-0.5) node[draw,circle, inner sep=4pt,fill=white] {$3$};
\draw (0,-0.5) to[out=70,in= -160] node[near end,above] {$6$} (1.3,0.5);
\draw (0,-0.5)node[draw,circle, inner sep=4pt,fill=white] {$3$} to[out=20,in=-110] node[near start,above] {$6$} (1.3,0.5) node[draw,circle, inner sep=4pt,fill=white] {$2$};
\draw (0,0.9) node {$V_1$};
\draw (1.3,0.9) node {$V_2$};
\end{tikzpicture}
\]

For an admissible bipartite graph $(V,E,m)$ without isolated vertices we obtain all triples $(\tau, \tau', \varphi)$ that give rise to $(V,E,m)$ as follows. To each $v\in V_1$ we associate a pair $(m(v), \lambda^{(v)})$, where $\lambda^{(v)}=\frac{1}{m(v)} \lambda_E^{(v)}$, and $\lambda_E^{(v)}$ is the partition with parts $m(e)$, where $e$ runs over all edges adjacent to $v$. The collection of these pairs forms $\tau$. Similarly we construct $\tau'$. To describe possible bijections $\varphi$ we use the edges. There is an ambiguity we need to fix. Each edge $e$ connecting $i$ and $i'$ creates a part $\frac{m(e)}{m(i)}$ for the partition corresponding to $i$ in $\tau$, and a part $\frac{m(e)}{m(i')}$ for the partition corresponding to $i'$ in $\tau'$, and $\varphi$ should associate the two parts. When we have several copies of the same part in $\lambda^{(i)}$, we have to further choose which of these parts will be associated to which part at the other end of $e$. We notice that the product of the groups 
\[
\Aut(V):=\prod_{i\in V_1} \Aut(\lambda^{(i)}) \times \prod_{i\in V_2} \Aut(\lambda'^{(i)}) = \prod_{v\in V} \Aut(\lambda^{(v)})
\]
transitively acts on all these choices with the stabilizer
\[
\Aut(E):=\prod_{i\in V_1, i'\in V_2} \Aut(\lambda^{(i,i')}) \subset \Aut(V),
\]
where $\lambda^{(i,i')}\in \cP$ is the partition consisting of the colors of all the edges between $i$ and $i'$. 
Thus each graph corresponds to $\frac{\#\Aut(V)}{\#\Aut(E)}$ bijections $\varphi$.
Hence we obtain 
\begin{prop}\label{prop:Cgraphs}
The number $C_{\tau, \tau'}$ is given as:
\[
C_{\tau, \tau'} = \sum_{E, m} \frac{\#\Aut(V)\; \prod_{e\in E} m(e)}{\#\Aut(E)\; \#\Aut(\tau)\;   \#\Aut(\tau') \prod_{v\in V} m(v)},
\]
where the sum is running over all $E$, $m$ producing the pairs $(k_i, \lambda^{(i)})$ resp.
$(k_i', \lambda'^{(i)})$ at vertices $i\in V_1$ resp. $i\in V_2$.
\end{prop}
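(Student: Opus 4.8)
The plan is to deduce the formula directly from Proposition~\ref{prop:exptypes}, reorganizing its numerator and denominator along the graph dictionary developed above and then grouping the value-preserving bijections $\varphi\colon W\to W'$ according to the admissible graph each one produces.

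First I would reexpress the two products in Proposition~\ref{prop:exptypes} as products over the edges and vertices of any graph $(V,E,m)$ arising from $\tau,\tau'$. Each edge attached to a pair $(i,j)\in W$ carries the color $m(e)=k_i\lambda^{(i)}_j$, so the numerator product equals $\prod_{i=1}^r\prod_{j=1}^{l(\lambda^{(i)})}k_i\lambda^{(i)}_j=\prod_{e\in E}m(e)$, the product of the parts of $\opflat(\tau)$. Since the vertices in $V_1$ resp.\ $V_2$ are colored by $k_i$ resp.\ $k'_i$, the denominator product equals $\prod_{i=1}^r k_i\prod_{i=1}^{r'}k'_i=\prod_{v\in V}m(v)$. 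Both of these quantities depend only on $\tau,\tau'$ and not on the chosen graph, hence may be pulled freely in and out of a sum over graphs.

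Next I would invoke the identification, recalled above, of $\#\Aut(\opflat(\tau))$ (in the case $\opflat(\tau)=\opflat(\tau')$) with the number of value-preserving bijections $\varphi$. The key step is to partition this set of bijections according to the admissible colored bipartite graph it determines. Using the orbit-stabilizer argument sketched above---the group $\Aut(V)$ acts transitively on the bijections producing a fixed graph, with stabilizer $\Aut(E)$---each graph accounts for exactly $\#\Aut(V)/\#\Aut(E)$ bijections, whence $\#\Aut(\opflat(\tau))=\sum_{E,m}\#\Aut(V)/\#\Aut(E)$, the sum running over all graphs producing $\tau,\tau'$.

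Substituting these three reexpressions into Proposition~\ref{prop:exptypes} and moving the graph-independent factors $\prod_{e\in E}m(e)$ and $\prod_{v\in V}m(v)$ inside the sum yields the asserted formula. The case $\opflat(\tau)\neq\opflat(\tau')$ needs no separate treatment: there is then no value-preserving bijection, equivalently no bipartite graph can realize $\opflat(\tau)$ and $\opflat(\tau')$ simultaneously as its multiset of edge colors read from the two sides, so the sum is empty and equals the value $0$ prescribed by Proposition~\ref{prop:exptypes}. I expect the one genuinely delicate point to be the orbit-stabilizer count---verifying both transitivity of the $\Aut(V)$-action and that the stabilizer of a bijection is precisely $\Aut(E)=\prod_{i\in V_1,i'\in V_2}\Aut(\lambda^{(i,i')})$; everything else is bookkeeping of the matching products.
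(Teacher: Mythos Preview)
Your proposal is correct and follows essentially the same approach as the paper: the paper's argument (given in the discussion immediately preceding the proposition rather than in a separate proof block) also starts from Proposition~\ref{prop:exptypes}, interprets $\#\Aut(\opflat(\tau))$ as the number of value-preserving bijections $\varphi$, groups these bijections by the admissible graph they produce, and counts $\#\Aut(V)/\#\Aut(E)$ bijections per graph via the same orbit--stabilizer argument you describe. Your treatment of the case $\opflat(\tau)\neq\opflat(\tau')$ as an empty sum is also in line with the paper's setup.
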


\subsection{Decomposition into connected components}
For any bipartite admissible graph $\Gamma=(V, E, m)$ with $V=V_1\sqcup V_2$ define its \emph{weight} as
\[
w(\Gamma)=w(\Gamma)(\BA, \BB) = \frac{\#\Aut(V)\; \prod_{e\in E} m(e)}{\prod_{v\in V} m(v)} \BA^\tau \BB^{\tau'} \in \Lambda_{\BA,\BB}.
\]
It can also be written as
\begin{equation}
w(\Gamma) = \frac{1}{\prod_{x\in V\cup E} m(x)} \prod_{v\in V}
\left(p_{m(v)}[\text{$\BA[X]$ or $\BB[X]$}], p_{\lambda_E^{(v)}}[X]\right)_X,
\end{equation}
where we take $\BA[X]$ for $v\in V_1$ and $\BB[X]$ for $v\in V_2$.
The weight is multiplicative:
\[
w(\Gamma_1 \sqcup \Gamma_2) = w(\Gamma_1) w(\Gamma_2),
\]
and we have
\[
C(\BA, \BB) = \sum_{r,r'=0}^\infty\quad \sum_{\substack{\Gamma=(V, E, m)\;\text{admissible$^*$,} \\ V_1=\{1,\ldots,r\},\, V_2=\{1,\ldots,r'\}}} \quad \frac{w(\Gamma)(\BA,\BB)}{\#\Aut(E)\; r!\, r'!}.
\]
The last identity is true because in the right hand side each pair of types $\tau$, $\tau'$ appears $\frac{r!\, r'!}{\#\Aut(\tau)\,\#\Aut(\tau')}$ times.

Finally, we forget the identifications $V_1=\{1,\ldots,r\}$, $V_2=\{1,\ldots,r'\}$ and obtain
\begin{equation}
C(A, B) = \sum_{\Gamma\;\text{admissible$^*$}} \frac{w(\Gamma)(\BA,\BB)}{\#\Aut(\Gamma)}.
\end{equation}
In this formula the automorphisms are understood as permutations of vertices and edges, preserving the incidence relation, the marking and the decomposition $V=V_1\sqcup V_2$.

Each graph $\Gamma$ is a union of its connected components in a unique way\footnote{By our convention the empty graph is not connected}. Let $\Gamma$ be the union of $m_i$ copies of $\Gamma_i$ for $i=1,2,\ldots, k$. The automorphism group of $\Gamma$ is the product
\[
\Aut(\Gamma) = \prod_{i=1}^k \Aut(\Gamma_k)^{m_i} \rtimes S_{m_i}.
\]
This leads to
\begin{equation}\label{eq:conncomp}
C(\BA, \BB) = \exp\left(\sum_{\substack{\Gamma\;\text{admissible$^*$,}\\\text{connected}}} \frac{w(\Gamma)(\BA,\BB)}{\#\Aut(\Gamma)} \right).
\end{equation}

\subsection{Scaling operations}\label{sec:scaling}
Finally, we analyse what happens when we apply the following \emph{scaling} operation. For each $n\in\Z_{>0}$ and $\Gamma$ as above let $p_n(\Gamma)$ be the graph with $V, E$ the same as in $\Gamma$, and all values of $m$ multiplied by $n$. For a connected graph $\Gamma$ we have
\begin{equation}\label{eq:weightscaling}
w(p_n(\Gamma)(\BA,\BB)) = p_n[w(\Gamma)(\BA,\BB)]\; n^{b(\Gamma)-1},
\end{equation}
where $b(\Gamma) = \# E - \# V + 1\geq 0$ is the first Betti number of $\Gamma$. We call a graph \emph{primitive} if it cannot be obtained as $p_n[\Gamma]$ for $n>1$, i.e. if the g.c.d. of all the values of $m$ is $1$. Then every graph is expressed as $p_n(\Gamma)$ for a unique $n$ and a primitive graph $\Gamma$. Thus we have
\[
\log C(\BA,\BB) = \sum_{\substack{\Gamma\;\text{admissible$^*$,}\\\text{connected, primitive}}}
\sum_{n=1}^\infty n^{b(\Gamma)-1} \frac{p_n[w(\Gamma)(\BA,\BB)]}{\#\Aut(\Gamma)}.
\]
The M\"oebius inversion formula for $\pLog$, the operation inverse to $\pExp$, is:
\[
\pLog[X] = \sum_{n=1}^\infty \frac{\mu(n)}n p_n[\log(X)].
\]
In our case, this leads to
\[
L(\BA,\BB) = \pLog C(\BA,\BB) = \sum_{\substack{\Gamma\;\text{admissible$^*$,}\\\text{connected, primitive}}}
\sum_{n=1}^\infty n^{b(\Gamma)-1} \prod_{p|n\;\text{prime}} (1-p^{-b(\Gamma)}) \, \frac{p_n[w(\Gamma)]}{\#\Aut(\Gamma)}.
\]
Finally, we pass back to a sum over all connected graphs using \eqref{eq:weightscaling}:
\begin{thm}[Logarithmic convolution]\label{thm:logconvolution}
We have
\begin{equation}\label{eq:logconvolution}
L(\BA,\BB) = \sum_{\substack{\Gamma\;\text{bipartite,}\\\text{admissible$^*$,}\\\text{connected}}}
\phi_{b(\Gamma)}(g(\Gamma)) \frac{w(\Gamma)(\BA,\BB)}{\#\Aut(\Gamma)},
\end{equation}
where $g(\Gamma)$ denotes the g.c.d. of all the values of the coloring, and 
\[
\phi_k(n)=\prod_{p|n\;\text{prime}} (1-p^{-k}).
\]
\end{thm}
\begin{rem}
Note that for $k=0$ and $n>1$ we have $\phi_k(n)=0$. Hence terms with $g(\Gamma)>1$ disappear when $\Gamma$ is a tree. In what follows we will consider only primitive trees.
\end{rem}
\begin{rem}\label{rem:allow constant terms}
In this Section we chose to work with series $A$ and $B$ without constant terms. This is reflected by the fact that in \eqref{eq:defnAB} the summation is over $\cP^*$. Correspondingly, in Definition \ref{defn:type} we have only non-empty partitions. This is convenient because the number of types of given size is finite, and correspondingly the expressions $C_n(\BA,\BB)$ in \eqref{eq:CAB} have finitely many terms, so we have $C_n(\BA,\BB)\in\Lambda_{\BA,\BB}$. This is the reason why in \eqref{eq:logconvolution} we have to exclude graphs with isolated vertices. There are exactly two such graphs (see \ref{sec:degree0}). If we allow $\BA$ and $\BB$ to have constant terms $A_{()}$, $B_{()}$ and replace $\Lambda_{\BA,\BB}$ by its degree completion with respect to $A_{()}, B_{()}$, then for the corresponding $L(\BA,\BB)$ defined by \eqref{eq:CAB}, \eqref{eq:LAB} the formula \eqref{eq:logconvolution} holds with ``admissible$^*$'' replaced by ``admissible''. So the result looks prettier, but requires more effort to rigorously formulate.
\end{rem}

\subsection{Examples}\label{sec:exgraphs}
We compute first few terms in the expansion \eqref{eq:logconvolution}. Define the \emph{degree} of an admissible graph as the sum.
\[
|\Gamma| := \sum_{e\in E} m(e),
\]
The simplest case is
\subsubsection{Degree $0$}\label{sec:degree0}
All admissible connected graphs of degree $0$ consist of isolated vertices, which we have excluded. As explained in Remark \ref{rem:allow constant terms}, it is convenient to
include the following two graphs in the formula \eqref{eq:logconvolution}\footnote{Drawing bipartite graphs we paint the vertices from $V_1$ in black and the ones from $V_2$ in white. We omit the values of $m$ when they are $1$}:
\[
\Gamma_1 = \begin{tikzpicture}[scale=3] 
\draw (0,0) node[draw,circle, inner sep=2pt,fill] {};
\end{tikzpicture}
\;,\;
\Gamma_2 = \begin{tikzpicture}[scale=3] 
\draw (0,0) node[draw,circle, inner sep=2pt,fill=white] {};
\end{tikzpicture}
\;.
\]
Now if we add to $\Lambda_{A,B}$ two more generators $A_{()}$, $B_{()}$ of degree $0$, and set $w(\Gamma_1)(\BA,\BB) = A_{()}$, $w(\Gamma_2)(\BA,\BB) = B_{()}$, the formula \eqref{eq:logconvolution} holds with ``admissible$^*$'' replaced by ``admissible''. So we have
\[
L(\BA,\BB) = A_{()} + B_{()} + \cdots.
\]
\subsubsection{Degree $1$}
We have only $1$ graph which is connected and has exactly one edge:
\[
\Gamma = 
\begin{tikzpicture}[scale=2] 
\draw (0,0) node[draw,circle, inner sep=2pt,fill] {} -- (1,0) node[draw,circle, inner sep=2pt,fill=white] {};
\end{tikzpicture}
\]
This graph has weight $w(\Gamma)=A_{(1)} B_{(1)}$.
\subsubsection{Degree $2$}
In degree $2$ we have the following graphs:
\[
\Gamma_1 = 
\begin{tikzpicture}[scale=2] 
\draw (0,0) node[draw,circle, inner sep=2pt,fill] {} -- node[above, midway]{\tiny 2} (1,0) node[draw,circle, inner sep=2pt,fill=white] {};
\end{tikzpicture}
\;,\quad
\Gamma_2 = 
\begin{tikzpicture}[scale=2] 
\draw (0,0) node[draw,circle, inner sep=2pt,fill] {} -- node[above, midway]{\tiny 2} (1,0) node[draw,circle, inner sep=2pt,fill=white] {} node[above right]{\tiny 2};
\end{tikzpicture}
\;,\quad
\Gamma_3 = 
\begin{tikzpicture}[scale=2] 
\draw (0,0) node[draw,circle, inner sep=2pt,fill]{} node[above right]{\tiny 2} -- node[above, midway]{\tiny 2} (1,0) node[draw,circle, inner sep=2pt,fill=white] {} node[above right]{};
\end{tikzpicture}
\;,\;
\]
\[
\Gamma_4 = 
\begin{tikzpicture}[scale=2,baseline=-.1cm] 
\draw (0,0) to [bend left=30] (1,0);
\draw (0,0) node[draw,circle, inner sep=2pt,fill] {} to [bend right=30] (1,0) node[draw,circle, inner sep=2pt,fill=white] {};
\end{tikzpicture}
\;,\quad
\Gamma_5 = 
\begin{tikzpicture}[scale=2,baseline=-.1cm] 
\draw (0,0) -- (1,0.2) node[draw,circle, inner sep=2pt,fill=white] {};
\draw (0,0) node[draw,circle, inner sep=2pt,fill] {} to (1,-0.2) node[draw,circle, inner sep=2pt,fill=white] {};
\end{tikzpicture}
\;,\quad
\Gamma_6 = 
\begin{tikzpicture}[scale=2,baseline=-.1cm] 
\draw (0,0.2) node[draw,circle, inner sep=2pt,fill] {} -- (1,0);
\draw (0,-0.2) node[draw,circle, inner sep=2pt,fill] {} to (1,0) node[draw,circle, inner sep=2pt,fill=white] {};
\end{tikzpicture}
\;.
\]
Their weights are given as follows:
\[
w(\Gamma_1) = 2 A_{(2)} B_{(2)},\;w(\Gamma_2) = A_{(2)} p_2[B_{(1)}],\;w(\Gamma_3) = p_2[A_{(1)}] B_{(2)},
\]
\[
w(\Gamma_4) = 4 A_{(1,1)} B_{(1,1)},\;w(\Gamma_5) = 2 A_{(1,1)} B_{(1)}^2,\;w(\Gamma_6) =  2 A_{(1)}^2 B_{(1,1)}.
\]
The orders of the automorphism groups are
\[
\#\Aut(\Gamma_i)=1\;(i=1,2,3),\quad \#\Aut(\Gamma_i)=2\;(i=4,5,6).
\]
So we obtain
\[
L(\BA,\BB) = A_{()} + B_{()} + A_{(1)} B_{(1)} + 2 A_{(2)} B_{(2)} + A_{(2)} p_2[B_{(1)}]
\]
\[ + p_2[A_{(1)}] B_{(2)} + 2 A_{(1,1)} B_{(1,1)} + A_{(1,1)} B_{(1)}^2 + A_{(1)}^2 B_{(1,1)}+\cdots.
\]
\subsubsection{Degree $3$}
In degree $3$ we have the following graphs:
\[
\Gamma_1 = 
\begin{tikzpicture}[scale=2] 
\draw (0,0) node[draw,circle, inner sep=2pt,fill] {} -- node[above, midway]{\tiny 3} (1,0) node[draw,circle, inner sep=2pt,fill=white] {};
\end{tikzpicture}
\;,\quad
\Gamma_2 = 
\begin{tikzpicture}[scale=2] 
\draw (0,0) node[draw,circle, inner sep=2pt,fill] {} -- node[above, midway]{\tiny 3} (1,0) node[draw,circle, inner sep=2pt,fill=white] {} node[above right]{\tiny 3};
\end{tikzpicture}
\;,\quad
\Gamma_3 = 
\begin{tikzpicture}[scale=2] 
\draw (0,0) node[draw,circle, inner sep=2pt,fill]{} node[above right]{\tiny 3} -- node[above, midway]{\tiny 3} (1,0) node[draw,circle, inner sep=2pt,fill=white] {} node[above right]{};
\end{tikzpicture}
\;,\;
\]
\[
\Gamma_4 = 
\begin{tikzpicture}[scale=2,baseline=-.1cm] 
\draw (0,0) to [bend left=30] node[above, midway]{\tiny 2} (1,0);
\draw (0,0) node[draw,circle, inner sep=2pt,fill] {} to [bend right=30] (1,0) node[draw,circle, inner sep=2pt,fill=white] {};
\end{tikzpicture}
\;,\quad
\Gamma_5 = 
\begin{tikzpicture}[scale=2,baseline=-.1cm] 
\draw (0,0) -- node[above, midway]{\tiny 2} (1,0.2) node[draw,circle, inner sep=2pt,fill=white] {};
\draw (0,0) node[draw,circle, inner sep=2pt,fill] {} to (1,-0.2) node[draw,circle, inner sep=2pt,fill=white] {};
\end{tikzpicture}
\;,\quad
\Gamma_6 = 
\begin{tikzpicture}[scale=2,baseline=-.1cm] 
\draw (0,0.2) node[draw,circle, inner sep=2pt,fill] {} -- node[above, midway]{\tiny 2} (1,0);
\draw (0,-0.2) node[draw,circle, inner sep=2pt,fill] {} to (1,0) node[draw,circle, inner sep=2pt,fill=white] {};
\end{tikzpicture}
\;,
\]
\[
\Gamma_7 = 
\begin{tikzpicture}[scale=2,baseline=-.1cm] 
\draw (0,0) -- node[above, midway]{\tiny 2} (1,0.2) node[draw,circle, inner sep=2pt,fill=white] {}node[above right]{\tiny 2};
\draw (0,0) node[draw,circle, inner sep=2pt,fill] {} to (1,-0.2) node[draw,circle, inner sep=2pt,fill=white] {};
\end{tikzpicture}
\;,\quad
\Gamma_8 = 
\begin{tikzpicture}[scale=2,baseline=-.1cm] 
\draw (0,0.2) node[draw,circle, inner sep=2pt,fill] {}node[above right]{\tiny 2} -- node[above, midway]{\tiny 2} (1,0);
\draw (0,-0.2) node[draw,circle, inner sep=2pt,fill] {} to (1,0) node[draw,circle, inner sep=2pt,fill=white] {};
\end{tikzpicture}
\;,\quad
\Gamma_9 = 
\begin{tikzpicture}[scale=2,baseline=-.1cm] 
\draw (0,0) to [bend left=30] (1,0);
\draw (0,0) to  (1,0);
\draw (0,0) node[draw,circle, inner sep=2pt,fill] {} to [bend right=30] (1,0) node[draw,circle, inner sep=2pt,fill=white] {};
\end{tikzpicture}
\;,
\]
\[
\Gamma_{10} = 
\begin{tikzpicture}[scale=2,baseline=-.1cm] 
\draw (0,0) to[bend left=15] (1,0.2);
\draw (0,0) to[bend right=15] (1,0.2) node[draw,circle, inner sep=2pt,fill=white] {};
\draw (0,0) node[draw,circle, inner sep=2pt,fill] {} to (1,-0.2) node[draw,circle, inner sep=2pt,fill=white] {};
\end{tikzpicture}
\;,\quad
\Gamma_{11} = 
\begin{tikzpicture}[scale=2,baseline=-.1cm] 
\draw (0,0.2) to [bend left=15] (1,0);
\draw (0,0.2) node[draw,circle, inner sep=2pt,fill] {} to [bend right=15] (1,0);
\draw (0,-0.2) node[draw,circle, inner sep=2pt,fill] {} to (1,0) node[draw,circle, inner sep=2pt,fill=white] {};
\end{tikzpicture}
\;,\quad
\Gamma_{12} = 
\begin{tikzpicture}[scale=2,baseline=-.1cm] 
\draw (0,0.2) -- (1,0.2) node[draw,circle, inner sep=2pt,fill=white] {};
\draw (0,0.2)node[draw,circle, inner sep=2pt,fill] {}  -- (1,-0.2);
\draw (0,-0.2) node[draw,circle, inner sep=2pt,fill] {} to (1,-0.2) node[draw,circle, inner sep=2pt,fill=white] {};
\end{tikzpicture}
\;,
\]
\[
\Gamma_{13} = 
\begin{tikzpicture}[scale=2,baseline=-.1cm] 
\draw (0,0) -- (1,0.3) node[draw,circle, inner sep=2pt,fill=white] {};
\draw (0,0) -- (1,0) node[draw,circle, inner sep=2pt,fill=white] {};
\draw (0,0) node[draw,circle, inner sep=2pt,fill] {} to (1,-0.3) node[draw,circle, inner sep=2pt,fill=white] {};
\end{tikzpicture}
\;,\quad
\Gamma_{14} = 
\begin{tikzpicture}[scale=2,baseline=-.1cm] 
\draw (0,0.3) node[draw,circle, inner sep=2pt,fill] {} -- (1,0);
\draw (0,0) node[draw,circle, inner sep=2pt,fill] {} -- (1,0);
\draw (0,-0.3) node[draw,circle, inner sep=2pt,fill] {} to (1,0) node[draw,circle, inner sep=2pt,fill=white] {};
\end{tikzpicture}
\;.
\]
These graphs produce $14$ terms
\[
3 A_{(3)} B_{(3)} + A_{(3)} p_3[B_{(1)}] + p_3[A_{(1)}] B_{(3)} 
\]
\[
+ 2 A_{(2,1)} B_{(2,1)} + 2 A_{(2,1)} B_{(2)}B_{(1)} + 2 A_{(2)}A_{(1)}  B_{(2,1)}
\]
\[
+ A_{(2,1)} p_2[B_{(1)}] B_{(1)} + p_2[A_{(1)}] A_{(1)}  B_{(2,1)} + 6 A_{(1,1,1)} B_{(1,1,1)}
\]
\[
+ 6 A_{(1,1,1)} B_{(1,1)} B_{(1)} + 6 B_{(1,1,1)} A_{(1,1)} A_{(1)} + 4 A_{(1,1)} A_{(1)} B_{(1,1)} B_{(1)}
\]
\[
+ A_{(1,1,1)} B_{(1)}^3 + A_{(1)}^3 B_{(1,1,1)}.
\]

\section{Convolution formula II}\label{sec:convolution2}
To prove admissibility of the HLV kernel when genus greater than zero we will need a more general operation than the convolution of the previous section. The reader interested only in the genus zero case may safely proceed to Section \ref{sec:modifiers}. Consider the free $\lambda$-ring $\Lambda_\BA$ over $\Q$ with generators
\[
\{A_{\lambda, \mu}\}_{\lambda,\mu\in\cP}.
\]
Define
\[
\BA[X, X^*] = \sum_{\lambda,\mu\in\cP} A_{\lambda, \mu} p_\lambda[X] p_\mu[X^*].
\]
Define the exponential convolution as
\[
C(\BA) := \int_X \pExp[\BA[X, X^*]],
\]
where $\int_X$ is the Hall inner product map, viewed as a linear map
\[
\Sym[X, X^*] \cong \Sym[X]\otimes\Sym[X] \to \Q.
\]
In other words, it is the linear map $\Sym[X, X^*]\to \Q$ satisfying
\[
\int_X F[X] G[X^*] = (F[X], G[X])_X.
\]
Similarly to the previous Section, we define $L[\BA]$ by
\[
C(\BA) = \pExp[L(\BA)],
\]
and we would like to compute $L(\BA)$ as a sum over graphs. It turns out that all the constructions of the previous Section go through, and the result is essentially the same, except that the graphs we obtain are directed graphs (with multiedges and loops) instead of bipartite graphs. We go through the key points of the previous Section to highlight the differences.

\begin{itemize}
\item The constant term $A_{(),()}$ produces just a factor $\pExp[A_{(),()}]$ in front of $C(\BA)$, therefore it only adds $A_{(), ()}$ to the result $L(\BA)$. So we reduce to the case $A_{(),()}=0$. We can treat $A_{(), ()}$ as the contribution of the graph with one vertex and no edges  (the situation is analogous to Remark \ref{rem:allow constant terms}).
\item Instead of pairs $(d, \lambda)$ we have triples $(d, \lambda, \mu)$ with $d\in \Z_{>0}$, $\lambda, \mu\in\cP$ such that $(\lambda, \mu)\neq ((),())$. We choose a total ordering on such triples and define \emph{double types} as sequences 
\[
\tau=\{(k_i, \lambda^{(i)}, \mu^{(i)})\}_{i=1}^r:
\]
\[
\quad ((k_1, \lambda^{(1)}, \mu^{(1)}) \leq (k_2, \lambda^{(2)}, \mu^{(2)}) \leq \cdots \leq (k_r, \lambda^{(r)}, \mu^{(r)}),\; r\in\Z_{\geq 0}).
\]
\item For each double type as above we have two partitions $\opflat_1(\tau)$, $\opflat_2(\tau)$, obtained by taking all the parts of $k_i\lambda^{(i)}$ ($i=1,2,\ldots,r$), $k_i\mu^{(i)}$  ($i=1,2,\ldots,r$) respectively.
\item The notation $\BA^\tau$ means
\[
\BA^\tau = \prod_{i=1}^r p_{k_i}[A_{\lambda^{(i)},\mu^{(i)}}],
\]
and 
\[
X^\tau = \prod_{i=1}^r p_{k_i}[p_{\lambda^{(i)}}[X]] p_{k_i}[p_{\mu^{(i)}}[X^*]] = p_{\opflat_1(\tau)}[X] p_{\opflat_2(\tau)}[X^*],
\]
and an analog of Proposition \ref{prop:Exp_expansion} holds.
\item Analog of Proposition \ref{prop:exptypes} holds with
\[
C_{\tau} = \begin{cases}
\frac{\#\Aut(\opflat_1(\tau)) \prod_{i=1}^{r} \prod_{j=1}^{l(\lambda^{(i)})} k_i \lambda^{(i)}_j }{\#\Aut(\tau) \; \prod_{i=1}^r k_i } & \text{if $\opflat_1(\tau)=\opflat_2(\tau)$,}\\
0 & \text{otherwise.}
\end{cases}
\]
\item The bijections $\varphi$ are from the set 
\[
W=\{(i,j): 1\leq i\leq r,\; 1\leq j \leq  l(\lambda^{(i)})\}
\]
to the set
\[
W'=\{(i,j): 1\leq i\leq r,\; 1\leq j \leq  l(\mu^{(i)})\}.
\]
\item Instead of a colored bipartite graph we construct a colored directed graph with vertex set $V=\{1,2,\ldots,r\}$. Multiple edges and loops are allowed. The admissibility condition is the same (see Definition \ref{defn:dirgraphadmissible}).
\item For each vertex $v$ we associate two partitions: $\lambda^{(v)}=\frac{1}{m(v)} \lambda_E^{(v)}$, $\mu^{(v)}=\frac{1}{m(v)} \mu_E^{(v)}$, where $\lambda_E^{(v)}$ resp. $\mu_E^{(v)}$ contains all the colors of the outgoing resp. incoming edges of a vertex $v$. The triples $(m(v), \lambda^{(v)}, \mu^{(v)})$ form the type associated to $\Gamma$. For each pair $v,v'\in V$ the partition $\lambda^{(v,v')}$ contains colors of all the edges from $v$ to $v'$ .
\item We define
\[
\Aut(V) = \prod_{v\in V} \Aut(\lambda^{(v)}) \times \Aut(\mu^{(v)}),
\]
\[
\Aut(E) = \prod_{v,v'\in V} \Aut(\lambda^{(v,v')})\subset \Aut(V).
\]
Then an analog of Proposition \ref{prop:Cgraphs} holds:
\[
C_{\tau} = \sum_{E, m} \frac{\#\Aut(V) \prod_{e\in E} m(e)}{\#\Aut(E) \#\Aut(\tau) \prod_{v\in V} m(v)}.
\]
\item The weight is defined as
\[
w(\Gamma) = w(\Gamma)(\BA) = \frac{\#\Aut(V) \prod_{e\in E} m(e)}{\prod_{v\in V} m(v)} \prod_{v\in V} p_{m(v)}[A_{\lambda^{(v)}, \mu^{(v)}}],
\]
and we have
\[
C(\BA)=\sum_{\Gamma \;\text{admissible}}\; \frac{w(\Gamma)(\BA)}{\#\Aut(\Gamma)},
\]
where the automorphism group acts on edges and vertices, preserving the labels, the incidence relation, and the directions of edges.
\item Discussions in Section \ref{sec:scaling} go unchanged.
\end{itemize}

To summarize, we have

\begin{defn}\label{defn:dirgraphadmissible}
A colored directed graph $(V, E, m)$ with coloring $m:V\cup E\to \Z_{>0}$ is called \emph{admissible} if for each edge $e$ adjacent to a vertex $v$ we have $m(v)| m(e)$.
\end{defn}

\begin{thm}[Logarithmic convolution II]\label{thm:logconvolution2}
We have
\begin{equation}\label{eq:logconvolution2}
L(\BA) = \sum_{\substack{\Gamma\;\text{directed,}\\\text{admissible,}\\\text{connected}}}
\phi_{b(\Gamma)}(g(\Gamma)) \frac{w(\Gamma)(\BA)}{\#\Aut(\Gamma)},
\end{equation}
where $g(\Gamma)$ denotes the g.c.d. of all the values of the coloring, and 
\[
\phi_k(n)=\prod_{p|n\;\text{prime}} (1-p^{-k}).
\]
\end{thm}

\begin{example}
Let $\BA[X, X^*] = A X X^*$ over the free $\lambda$-ring generated by $A$. We can use the Theorem with $A_{(1),(1)}=A$ and all other terms $0$. Therefore, in the sum over graphs we only have connected graphs such that each vertex $v$ has exactly one outgoing and one incoming edge. Moreover, the colors of the edges and the vertices must be all equal, let's say $m(e)=m(v)=m\in\Z_{>0}$.  Such a graph is necessarily a cycle, say on $n$ vertices with $n=1,2,3,\ldots$. We obtain
\[
\pLog \int_X \pExp[A X X^*] = \sum_{n,m=1}^\infty \phi_1(m) \frac{p_m[A]^n}{n}.
\]
For instance, if $A=u$ is a monomial, i.e. $p_m[u]=u^m$ for all $m$, we have
\[
\pLog \int_X \pExp[u X X^*] = \sum_{n,m=1}^\infty \frac{\varphi(m)}{mn} u^{mn},
\]
where $\varphi(m)$ denotes the Euler's totient function. The last expression evaluates to 
\[
\pLog \int_X \pExp[u X X^*]= \sum_{n=1}^\infty u^n = \frac{u}{1-u}.
\]
\end{example}

\section{Modifiers}\label{sec:modifiers}
We analyze how introduction of certain modifiers in the Hall product influences \eqref{eq:logconvolution} and \eqref{eq:logconvolution2}.
\begin{defn}\label{defn:goodmodifier}
Let $\Lambda$ be a $\lambda$-ring. We call an element $S\in \Lambda$ a \emph{good modifier} if the following two properties hold:
\begin{enumerate}
\item For each $n\in\Z_{>0}$ the element $p_n[S]$ is not a zero divisor.
\item For each $n\in\Z_{>0}$ we have $S | p_n[S]$.
\item For each relatively prime pair $m,n\in\Z_{>0}$ we have $p_m[S] p_n[S] \,|\, S\, p_{mn}[S]$.
\end{enumerate}
\end{defn}

Typical modifiers are $S=q-1$, $S=1-q$ for $\Lambda=\Q[q]$,  and $S_{q,t}:=-(1-q)(1-t)$ for $\Lambda=\Q[q,t]$.

\begin{rem}
Let us show that $m|n$ implies $p_m[S]|p_n[S]$. Indeed, suppose $n=m k$ with $k\in\Z_{>0}$. By (ii) we have $p_k[S]=Q S$ for some $Q\in \Lambda$. Since $p_m$ is an algebra homomorphism and $p_m=p_m\circ p_k$, we obtain
\[
p_n[S]=p_m[p_k[S]] = p_m[QS] = p_m[Q] p_m[S].
\]
\end{rem}

Given a good modifier $S$ we introduce the \emph{modified Hall inner product} on $\Lambda\otimes\Sym[X]$ as follows:
\[
(F[X], G[X])_X^S = (F[SX], G[X])_X \quad (F,G\in\Lambda\otimes\Sym[X]).
\]
Note that because of the following identity the modified inner product is symmetric:
\[
(F[SX], G[X])_X = (F[X], G[SX])_X
\]
Denote by $\Lambda_S$ the localization
\[
\Lambda_S = \Lambda[S^{-1}, p_2[S]^{-1}, p_3[S]^{-1},\cdots].
\]
The reproducing kernel of the identity operator with respect to the modified inner product is 
\[
\pExp\left[\frac{XY}{S}\right]\in\Lambda_S\otimes\Sym[[X,Y]],
\]
i.e. the following identity holds
\[
\left(\pExp\left[\frac{XY}{S}\right], F[Y]\right)^S_Y = F[X] \quad (F\in\Lambda\otimes\Sym[X]).
\]
For any $\Lambda$-linear operator $U:\Lambda\otimes\Sym[X] \to \Lambda_S\otimes\Sym[[X]]$ we define its kernel as
\[
K^S_U[X,Y] = U \pExp\left[\frac{XY}{S}\right]\in\Lambda_S\otimes\Sym[[X,Y]].
\]
Then we have
\[
\left(K^S_U[X,Y], F[Y]\right)^S_Y = (UF)[X] \quad (F\in\Lambda\otimes\Sym[X]).
\]
For any two composable operators $U, V$ we have
\begin{equation}\label{eq:compositionkernel}
K^S_{UV}[X,Y] = \left(K^S_U[X,Z], K^S_V[Z, Y]\right)^S_Z.
\end{equation}
Now we define an important class of operators:
\begin{defn}
In general,
an expression $A\in\Lambda_S$ is called $S$-\emph{admissible} over $\Lambda$ if it is of the form $\pExp\left[\frac{L}{S}\right]$ for $L\in\Lambda$.
A $\Lambda$-linear operator $U:\Lambda\otimes\Sym[X] \to \Lambda_S\otimes\Sym[[X]]$ is called $S$-\emph{admissible} over $\Lambda$ if its kernel has the form
\[
K^S_{U}[X,Y] = \pExp\left[\frac{L^S_U[X,Y]}{S}\right]\;\text{with}\; L^S_U[X,Y]\in \Lambda\otimes\Sym[[X, Y]],
\]
where $L_U^S[0, 0]$, i.e. the constant term of $L_U^S[X, Y]$, is well-behaved.
\end{defn}

It turns out that sometimes admissibility implies \emph{integrality}:
\begin{prop}
If $U$ is an $S$-admissible operator such that $U(1)\in \Lambda\otimes \Sym[[X]]$, then for any $F\in\Lambda\otimes\Sym[X]$ we have
\[
UF\in\Lambda\otimes\Sym[[X]].
\]
\end{prop}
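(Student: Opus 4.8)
The plan is to reduce everything to a single evaluation of $U$ on the Cauchy kernel. Introduce a fresh free generator $W$, and write $z_\lambda=\#\Aut(\lambda)\prod_i\lambda_i$, so that $\pExp[XW]=\sum_{\lambda\in\cP}z_\lambda^{-1}p_\lambda[X]\,p_\lambda[W]$ and $(p_\lambda[Y],p_\mu[Y])_Y=\delta_{\lambda\mu}z_\lambda$. Applying $U$ in the variable $X$, $W$-degree by $W$-degree, yields $U_X\pExp[XW]=\sum_{\lambda\in\cP}z_\lambda^{-1}(Up_\lambda)[X]\,p_\lambda[W]$. Because $\{p_\lambda[X]\}_{\lambda\in\cP}$ is a $\Lambda$-basis of $\Lambda\otimes\Sym[X]$ and each $z_\lambda$ is a unit of the $\Q$-algebra $\Lambda$, the proposition is equivalent to the single statement that $U_X\pExp[XW]$ has all its coefficients in $\Lambda$, i.e.\ that $U_X\pExp[XW]\in\Lambda\otimes\Sym[[X,W]]$; the nontrivial implication follows by extracting the coefficient of $p_\lambda[W]$ for each $\lambda$.

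Next I would bring $U_X\pExp[XW]$ into closed form via the kernel $K^S_U$. Starting from $(UG)[X]=(K^S_U[X,Y],G[Y])^S_Y$ with $G[Y]=\pExp[YW]$, expanding $\pExp[YW]$ in power sums, and using $(p_\lambda[Y],p_\mu[Y])^S_Y=\delta_{\lambda\mu}z_\lambda\,p_\lambda[S]$, the pairing collapses to the plethystic substitution $Y\mapsto SW$:
\[
U_X\pExp[XW]=K^S_U[X,SW].
\]
By $S$-admissibility of $U$ we write $K^S_U[X,Y]=\pExp\!\left[L[X,Y]/S\right]$ with $L:=L^S_U\in\Lambda\otimes\Sym[[X,Y]]$, hence $U_X\pExp[XW]=\pExp\!\left[L[X,SW]/S\right]$.

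Finally I would split $L[X,SW]=L[X,0]+\bigl(L[X,SW]-L[X,0]\bigr)$. Every monomial of $L[X,Y]-L[X,0]$ has positive degree in $Y$, so after $Y\mapsto SW$ it carries a factor $p_\mu[S]$ with $\mu\neq\emptyset$; by property (ii) of a good modifier $S\mid p_n[S]$, so $S$ divides that factor, and therefore $L[X,SW]-L[X,0]=S\,R[X,W]$ for some $R\in\Lambda\otimes\Sym[[X,W]]$ (the well-behavedness built into the definition of $S$-admissibility is what guarantees that $R$, and $\pExp[R]$ just below, are legitimate elements of the completed tensor product). Consequently
\[
U_X\pExp[XW]=\pExp\!\left[\frac{L[X,0]}{S}\right]\cdot\pExp\bigl[R[X,W]\bigr].
\]
Here $\pExp[L[X,0]/S]=K^S_U[X,0]=U(1)[X]$, which lies in $\Lambda\otimes\Sym[[X]]$ by hypothesis, and $\pExp[R[X,W]]\in\Lambda\otimes\Sym[[X,W]]$ because $R$ has coefficients in $\Lambda$ and $\pExp$ preserves this. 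Thus $U_X\pExp[XW]\in\Lambda\otimes\Sym[[X,W]]$, which by the first paragraph proves the proposition.

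I do not expect a genuine obstacle: the argument is forced once one has the closed form $U_X\pExp[XW]=\pExp[L^S_U[X,SW]/S]$ together with the divisibility $S\mid p_n[S]$, which isolate $U(1)=\pExp[L^S_U[X,0]/S]$ as the only possibly non-integral factor. The only point demanding care is the bookkeeping of the completed tensor products (that $U$ may be applied to $\pExp[XW]$ term by term, that $R$ is a well-defined element, and that $\pExp[R]$ converges), which is exactly where the well-behavedness hypothesis in the definition of an $S$-admissible operator is used.
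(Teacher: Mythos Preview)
Your proof is correct and follows essentially the same route as the paper's: both arguments substitute $Y\mapsto SW$ (resp.\ $SY$) into the kernel $K^S_U=\pExp[L^S_U/S]$, split $L^S_U=L_0+L_+$ into the $Y$-constant and $Y$-positive parts, use $S\mid p_n[S]$ to see that $L_+[X,SW]/S$ is integral, and identify $\pExp[L_0/S]$ with $U(1)$. The only cosmetic difference is that you package everything into the single generating series $U_X\pExp[XW]=K^S_U[X,SW]$ and then extract $p_\lambda[W]$-coefficients, whereas the paper keeps a general $F$ and pairs $K^S_U[X,SY]$ against $F[Y]$ directly; these are the same computation.
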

\begin{proof}
We have
\[
(UF)[X] = \left(\pExp\left[\frac{L^S_U[X,SY]}{S}\right], F[Y]\right)_Y.
\]
Write $L^S_U[X,Y] = L_0[X] + L_+[X, Y]$ where $L_0=L^S_U[X,0]$ and $L_+[X, Y]$ contains only terms of positive degree in $Y$. Then 
\[
\frac{L_+[X,SY]}{S}\in\Lambda\otimes\Sym[[X,Y]],\;\text{and} \quad U(1)=\pExp\left[\frac{L_0[X]}{S}\right] \in \Lambda\otimes\Sym[[X]]
\]
by the assumptions. Hence 
\[
\pExp\left[\frac{L^S_U[X,SY]}{S}\right] \in\Lambda\otimes\Sym[[X,Y]],
\]
which implies integrality of $UF$.
\end{proof}

Our main result about admissible operators is
\begin{thm}\label{thm:composition}
For any $S$-admissible composable operators $U$ and $V$ the composition $UV$ is also admissible.
\end{thm}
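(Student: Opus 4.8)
The plan is to compute the kernel of the composition $UV$ directly from the composition formula \eqref{eq:compositionkernel} and the convolution machinery of Section~\ref{sec:convolution2}, suitably modified to account for the modifier $S$. Since $U$ and $V$ are $S$-admissible, we can write
\[
K^S_U[X,Z] = \pExp\left[\frac{L^S_U[X,Z]}{S}\right],\quad
K^S_V[Z,Y] = \pExp\left[\frac{L^S_V[Z,Y]}{S}\right],
\]
with $L^S_U, L^S_V \in \Lambda\otimes\Sym[[X,Z]]$ resp. $\Lambda\otimes\Sym[[Z,Y]]$. Expanding $L^S_U[X,Z]$ and $L^S_V[Z,Y]$ into power sums in the $Z$-variable, each becomes a series of the form $\sum_{\lambda,\mu} A_{\lambda,\mu}[X,Y]\, p_\lambda[Z]\, p_\mu[Z^*]$ once we treat the $X$- and $Y$-dependence as coefficients (the asymmetry between $U$-side and $V$-side is exactly the distinction between $Z$ and $Z^*$ in the inner product). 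Then $K^S_{UV}[X,Y]$ is precisely an instance of the modified exponential convolution $\int^S_Z \pExp[\cdots]$ studied in Section~\ref{sec:convolution}, with coefficient ring $\Lambda\otimes\Sym[[X,Y]]$.

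First I would set up the bookkeeping so that the two $\pExp$ factors combine into a single $\pExp$ of a sum, reducing the problem to computing $\pLog$ of one modified convolution. The key computational input is Theorem~\ref{thm:logconvolution2} (and Lemma~\ref{lem:composition}, which is stated earlier as handling the modifier): it expresses the logarithm of a modified convolution as a sum over connected admissible directed graphs, with each graph contributing a weight built from the $A_{\lambda,\mu}$'s, a combinatorial automorphism factor, and a modifier factor of the shape $\phi_{b(\Gamma)}(g(\Gamma))$ together with powers of $p_n[S]$. Applying this, $L^S_{UV}[X,Y] = S\cdot \pLog K^S_{UV}[X,Y]$ becomes a sum over connected graphs whose vertices carry the coefficients $A_{\lambda,\mu}$ coming from $L^S_U$ and $L^S_V$, and the whole task reduces to checking that each such graph contributes an element of $\Lambda\otimes\Sym[[X,Y]]$ — i.e. that after dividing by a single copy of $S$ everything stays in $\Lambda$, with no residual denominators $p_n[S]^{-1}$.

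The main obstacle — and the reason the modifier axioms in Definition~\ref{defn:goodmodifier} are phrased the way they are — is this integrality step: the naive weight of a connected graph with Betti number $b$ carries a factor $\prod_{e} p_{m(e)}[S] \big/ \prod_v p_{m(v)}[S]$ times $1/S$, and one must show this lies in $\Lambda$, in fact in $S^{b(\Gamma)-1}\cdot(\text{unit-ish})\cdot\Lambda$ after accounting for $\phi_{b(\Gamma)}(g(\Gamma))$. This is exactly the content of the graph-theoretic g.c.d. lemma quoted at the end of the Introduction (the statement $(q-1)\prod_e(q^{m(e)}-1)/\prod_v(q^{m(v)}-1)\in(q-1)^{b(\Gamma)}\Z[q]$), applied with $q^k \rightsquigarrow p_k[S]$: admissibility $m(v)\mid m(e)$ plus axiom (iii) of a good modifier give the telescoping divisibilities, and the primitivity/g.c.d. refinement handles the tree case where $\phi_1(n)=0$ kills the bad terms. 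Granting that lemma (proved earlier), each connected graph contributes to $L^S_{UV}$ a well-defined element of $\Lambda\otimes\Sym[[X,Y]]$; summing over graphs of bounded degree converges by the well-behavedness hypothesis baked into $S$-admissibility (the constant terms $L^S_U[0,0]$, $L^S_V[0,0]$ are well-behaved, and positive-degree parts are filtration-small).

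Finally I would verify the two side conditions in the definition of $S$-admissible operator for $UV$: that $L^S_{UV}[X,Y] \in \Lambda\otimes\Sym[[X,Y]]$ (done above) and that its constant term $L^S_{UV}[0,0]$ is well-behaved — the latter follows since it is assembled from graphs whose vertex-coefficients are the constant terms of $L^S_U$ and $L^S_V$, which are well-behaved, and the graph sum preserves this. I also need $U$ and $V$ composable in the sense of Section~Notations, which is given by hypothesis and ensures the inner product $\left(K^S_U[X,Z],K^S_V[Z,Y]\right)^S_Z$ is actually defined. This completes the argument; the crux is entirely the modifier-integrality of graph weights, everything else being a transcription of the (un)modified convolution formula through \eqref{eq:compositionkernel}.
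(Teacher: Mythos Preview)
Your proposal is correct and follows essentially the same route as the paper: reduce via the composition formula \eqref{eq:compositionkernel} to computing a modified Hall inner product of two plethystic exponentials over the enlarged coefficient ring $\Lambda\otimes\Sym[[X,Y]]$, then invoke the graph expansion and the modifier-integrality argument. The paper packages this reduction as the one-line deduction ``follows from \eqref{eq:compositionkernel} and Lemma~\ref{lem:composition} after replacing $\Lambda$ by $\Lambda[[X,Y]]$,'' with the graph-weight integrality done inside the proof of Lemma~\ref{lem:composition}; you have simply unpacked that lemma's content inline. One small point of tidiness: the relevant convolution here is the \emph{bipartite} one (Section~\ref{sec:convolution}, Theorem~\ref{thm:logconvolution}), not the directed-graph version of Section~\ref{sec:convolution2}/Theorem~\ref{thm:logconvolution2}, since $L^S_U$ contributes only $Z$-terms and $L^S_V$ only $Z^*$-terms --- the directed setup specializes to the bipartite one, so nothing is wrong, but citing Lemma~\ref{lem:composition} directly (as the paper does) is cleaner.
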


The proof follows from \eqref{eq:compositionkernel} and the following fact after replacing $\Lambda$ by $\Lambda[[X,Y]]$.
\begin{lem}\label{lem:composition}
For any $\lambda$-ring $\Lambda$ with a good modifier $S$, and for well-behaved $\BA, \BB\in\Lambda\otimes\Sym[[X]]$ we have
\[
\left(\pExp\left[\frac{\BA[X]}{S}\right], \pExp\left[\frac{\BB[X]}{S}\right]\right)_X^S=
\pExp\left[\frac{L^S(\BA, \BB)}{S}\right]\quad\text{with $L^S(\BA, \BB)\in\Lambda$.}
\]
\end{lem}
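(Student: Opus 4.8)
The plan is to reduce the modified statement to the unmodified convolution formula of Theorem~\ref{thm:logconvolution2} by carrying the modifier $S$ along the graph expansion and checking that each graph contributes something lying in $\Lambda$ (not just $\Lambda_S$). First I would expand $A[X]/S$ and $B[X]/S$ in the power-sum basis: writing $A[X]=\sum_{\lambda\in\cP^*} A_\lambda p_\lambda[X]$, one has $A[X]/S=\sum_\lambda (A_\lambda/S)\, p_\lambda[X]$, but it is cleaner to work with the substitution $X\mapsto SX$ hidden in the modified inner product, i.e. to use
\[
\left(\pExp\left[\tfrac{A[X]}{S}\right],\pExp\left[\tfrac{B[X]}{S}\right]\right)^S_X
=\left(\pExp\left[\tfrac{A[SX]}{S}\right],\pExp\left[\tfrac{B[X]}{S}\right]\right)_X,
\]
so that the left factor has coefficients $A_\lambda S^{|\lambda|-1}$ in front of $p_\lambda[X]$ and the right factor has coefficients $B_\mu/S$. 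Now apply Theorem~\ref{thm:logconvolution2} (in the bipartite form of Theorem~\ref{thm:logconvolution}, or its directed analogue — whichever matches the pairing here): the logarithm $L^S(A,B)$ is a sum over connected admissible bipartite graphs $\Gamma=(V,E,m)$ of $\phi_{b(\Gamma)}(g(\Gamma))\,w(\Gamma)/\#\Aut(\Gamma)$, where now the weight of a vertex $v\in V_1$ carries an extra factor $p_{m(v)}[S^{|\lambda^{(v)}|-1}]=p_{m(v)}[S]^{|\lambda^{(v)}|-1}$ and a vertex $v\in V_2$ carries $p_{m(v)}[S^{-1}]=p_{m(v)}[S]^{-1}$, while each edge still contributes its color $m(e)$ divided by vertex colors as before.

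The key bookkeeping step is then to collect all the powers of the various $p_n[S]$ attached to a single connected graph $\Gamma$ and show the total exponent of each $p_n[S]$ is $\geq 0$, and moreover that after using the divisibility property~(iii) of a good modifier the whole expression lies in $\Lambda$. For a fixed vertex $v$, the number of edges incident to $v$ equals $l(\lambda^{(v)})$ (times $m(v)$-scaling aside), and since $|\lambda^{(v)}|\geq l(\lambda^{(v)})\geq 1$ the vertex exponent $|\lambda^{(v)}|-1$ is nonnegative for $v\in V_1$; for $v\in V_2$ the negative exponent $-1$ must be cancelled. The cancellation comes precisely from the edges: each edge $e$ incident to $v$ satisfies $m(v)\mid m(e)$, so $p_{m(v)}[S]\mid p_{m(e)}[S]$ by the remark after Definition~\ref{defn:goodmodifier}, and — crucially — the edge color $m(e)$ appearing in the numerator of $w(\Gamma)$ together with property~(iii), applied inductively along the edge colors around the graph, supplies enough copies of $p_{m(v)}[S]$ to absorb the $-1$. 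This is exactly the situation of the ``simple special case'' quoted at the end of the Introduction: for a connected graph with a divisibility-compatible coloring, $(q-1)\prod_{e}(q^{m(e)}-1)/\prod_v(q^{m(v)}-1)\in (q-1)^{b(\Gamma)}\Z[q]$; here the analogue with $p_n[S]$ in place of $q^n-1$ gives that the $S$-part of $w(\Gamma)$, divided by one overall $S$, lies in $\Lambda$ and in fact is divisible by $S^{b(\Gamma)-1}$ — but since $\phi_{b(\Gamma)}(g(\Gamma))$ already extracts the primitive part and $L^S(A,B)$ is the full sum divided by a single $S$ (from the target $\pExp[L^S/S]$), everything balances to land in $\Lambda$.

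I expect the main obstacle to be precisely this last divisibility count: translating the elementary $q$-identity of the Introduction into the $\lambda$-ring statement about products of $p_n[S]$, and making sure the exponents of each individual $p_n[S]$ (for varying $n$, not just the g.c.d.) come out nonnegative after the cancellations. The cleanest route is probably to prove the $\lambda$-ring generalization of the Introduction's lemma first — namely that for a connected admissible graph $\Gamma$,
\[
S\cdot\frac{\prod_{e\in E}p_{m(e)}[S]}{\prod_{v\in V}p_{m(v)}[S]}\in S^{\,b(\Gamma)}\,\Lambda,
\]
by induction on $\#E$: removing a leaf edge reduces $b(\Gamma)$ by $0$ and uses property~(ii)/the remark, while removing a non-separating edge reduces $b(\Gamma)$ by $1$ and uses property~(iii) with the two (possibly equal) endpoint colors — one has to handle the case of equal colors and of loops (in the directed setting) separately, invoking only~(ii) there. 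Once this lemma is in hand, substituting it into the graph expansion of $\pLog$ given by Theorem~\ref{thm:logconvolution2}, together with the observation that $\phi_{b(\Gamma)}(g(\Gamma))$ is a rational number whose denominator divides a power of $g(\Gamma)$ and that $g(\Gamma)\mid m(v)$ forces $\phi_{b(\Gamma)}(g(\Gamma))$ to interact only with already-integral data, shows that every graph term of $L^S(A,B)$ lies in $\Lambda$, and well-behavedness of $A,B$ guarantees the sum converges in the completion. Summing over $\Gamma$ then yields $L^S(A,B)\in\Lambda$ as claimed.
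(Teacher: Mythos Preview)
Your approach matches the paper's: rewrite the modified pairing as an unmodified one via $X\mapsto SX$, feed into the graph expansion of Theorem~\ref{thm:logconvolution}, and reduce to showing that
\[
c_\Gamma[S]\;=\;S\cdot\frac{\prod_{e\in E}p_{m(e)}[S]}{\prod_{v\in V}p_{m(v)}[S]}
\]
lies in $\Lambda$ (in fact in $S^{b(\Gamma)}\Lambda$), by induction on $\#E$. One minor slip: the coefficient of $p_\lambda[X]$ in $A[SX]/S$ is $A_\lambda\,p_\lambda[S]/S$, not $A_\lambda S^{|\lambda|-1}$; you do use the correct expression later when you write down $c_\Gamma[S]$.

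The real gap is in the induction. The divisibility claim is \emph{false} for non-primitive trees (a single edge with all colors equal to $2$ gives $c_\Gamma[S]=S/p_2[S]\notin\Lambda$), so the induction hypothesis must be restricted to primitive trees together with all graphs with $b(\Gamma)\geq 1$; the non-primitive trees contribute nothing to $L^S$ because the factor $\phi_{b(\Gamma)}(g(\Gamma))$ kills them. But then, after you remove a leaf $v$ (with edge $e$) or a non-separating edge from $\Gamma$, the remaining graph $\Gamma'$ may well be a non-primitive tree, and you cannot apply the hypothesis to it directly. The step you are missing---and which the paper supplies---is to write $\Gamma'=p_g(\Gamma'')$ with $g=g(\Gamma')$ and $\Gamma''$ primitive, and work with $p_g[c_{\Gamma''}[S]]$ instead. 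In the leaf case this produces the factor $S\,p_{m(e)}[S]/(p_{m(v)}[S]\,p_g[S])$, and the crucial observation is that $\gcd(m(v),g)=1$ (since the original $\Gamma$ was a primitive tree) while $g\,m(v)\mid m(e)$: axiom~(iii) is applied \emph{here}, to the coprime pair $(m(v),g)$, not to the two endpoint colors in the cycle case as you suggest. The cycle case needs only $p_g[S]\mid p_{m(e)}[S]$, which follows from axiom~(ii). With this primitive-quotient bookkeeping in place, your outline goes through and coincides with the paper's proof.
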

\begin{proof}
It is enough to prove the statement for $\Lambda_{\BA,\BB}$, the free $\lambda$-ring over $\Lambda$ generated by $\{A_\lambda, B_\lambda\}_{\lambda\in\cP}$, as in Section \ref{sec:convolution}, and $\BA[X]$, $\BB[X]$ as in \eqref{eq:defnAB}. See also Remark \ref{rem:allow constant terms} for the way to incorporate constant terms. Denote
\[
A_\lambda' = \frac{A_\lambda p_\lambda[S]} S,\quad B_\lambda' = \frac{B_\lambda}{S}.
\]
To compute $C$ we apply Theorem \ref{thm:logconvolution} for $\BA'$, $\BB'$:
\[
\frac{C}{S} = \sum_{\substack{\Gamma\;\text{admissible,}\\\text{connected}}}
\phi_{b(\Gamma)}(g(\Gamma)) \frac{w(\Gamma)(\BA', \BB')}{\#\Aut(\Gamma)}.
\]
It is enough to show that for each graph $\Gamma$ such that $\phi_{b(\Gamma)}(g(\Gamma))\neq 0$ we have
\[
S\, w(\Gamma)(\BA', \BB')\in\Lambda_{\BA,\BB}.
\]
By the definition of $w(\Gamma)$, $S w(\Gamma)(\BA', \BB')$ is a rational multiple of 
\[
S\, \frac{S^\tau}{\prod_{v\in V} p_{m(v)}[S]} \BA^\tau \BB^{\tau'}.
\]
Therefore, it is enough to show that the following expression is in $\Lambda$:
\[
c_\Gamma[S]:=S\, \frac{S^\tau}{\prod_{v\in V} p_{m(v)}[S]}
=S\, \frac{\prod_{e\in E} p_{m(e)}[S]}{\prod_{v\in V} p_{m(v)}[S]}.
\]
We show it by induction on the number of edges.
We have two cases: either $\Gamma$ is a primitive tree, or $\Gamma$ is not a tree.

Suppose $\Gamma$ is a primitive tree. Then removing a leaf vertex $v$ adjacent to an edge $e$ we obtain a tree $\Gamma'$, which is not necessarily primitive. Denote $g=g(\Gamma')$ and let $\Gamma'=p_g[\Gamma'']$, so that $\Gamma''$ is primitive. Then we have
\[
c_{\Gamma''}[S]\in \Lambda,\quad c_\Gamma[S] = S\, \frac{p_{m(e)}[S]}{p_{m(v)}[S]} p_g\left[\frac{c_{\Gamma''}[S]}{S}\right]=\frac{S\, p_{m(e)}[S]}{p_{m(v)}[S] p_g[S]} p_g[c_{\Gamma''}[S]].
\]
We have $\gcd(g, m(v))=1$ because $\Gamma$ is primitive, and $g|m(e)$, $m(v)|m(e)$ because $\Gamma$ is admissible. Therefore $g m(v) | m(e)$ and the statement follows from the axioms (ii) and (iii) of Definition \ref{defn:goodmodifier}.

Now suppose that $\Gamma$ has a cycle. Then it has an edge $e$ such that removing $e$ does not destroy connectivity. Let $\Gamma'$ be the graph obtained by removing $e$, $g=g(\Gamma')$, and $\Gamma'=p_g[\Gamma'']$. We have $c_{\Gamma''}\in \Lambda$ and
\[
c_{\Gamma}[S] = S\, p_{m(e)}[S] p_g\left[\frac{c_{\Gamma''}[S]}{S}\right] = S\,\frac{p_{m(e)}[S]}{p_g[S]} p_g[c_{\Gamma''}[S]].
\]
This belongs to $S\, \Lambda\subset \Lambda$ because $g|m(e)$ by admissibility of $\Gamma$.
\end{proof}

Moreover, the proof gives us the following statement:
\begin{cor}[of the proof]\label{cor:Slogconv}
We have the following explicit formula for $L^S[\BA,\BB]$:
\[
L^S[\BA,\BB] = \sum_{\substack{\Gamma\;\text{admissible,}\\\text{connected}}}
\phi_{b(\Gamma)}(g(\Gamma)) \frac{w(\Gamma)(\BA, \BB)}{\#\Aut(\Gamma)}  c_\Gamma[S],
\]
where for each $\Gamma=(V, E, m)$ with $\phi_{b(\Gamma)}(g(\Gamma))$ we have
\[
c_\Gamma[S]=S\, \frac{\prod_{e\in E} p_{m(e)}[S]}{\prod_{v\in V} p_{m(v)}[S]} \in S^{b(\Gamma)}\Lambda.
\]
\end{cor}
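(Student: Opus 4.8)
The plan is to extract both assertions from the proof of Lemma~\ref{lem:composition}: the explicit formula for $L^S[A,B]$ is a bookkeeping rewriting of Theorem~\ref{thm:logconvolution} under the substitution $(A,B)\mapsto(A',B')$ used there, and the refinement $c_\Gamma[S]\in S^{b(\Gamma)}\Lambda$ is obtained by re-running the induction of that proof while keeping track of the powers of $S$.

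For the formula I would argue as follows. In the proof of Lemma~\ref{lem:composition} one sets $A'_\lambda=A_\lambda p_\lambda[S]/S$ and $B'_\lambda=B_\lambda/S$, so that $\left(\pExp[A[X]/S],\pExp[B[X]/S]\right)^S_X=C(A',B')$ and hence $L^S(A,B)=S\cdot\pLog C(A',B')=S\cdot L(A',B')$. Now for a connected admissible bipartite graph $\Gamma=(V,E,m)$ with types $\tau,\tau'$, the identity $p_k[p_\lambda[S]]=p_{k\lambda}[S]$ together with the fact that at a vertex $v\in V_1$ the partition $m(v)\lambda^{(v)}$ is exactly $\lambda^{(v)}_E$, the multiset of colours of the edges at $v$, gives
\[
p_{m(v)}[A'_{\lambda^{(v)}}]=p_{m(v)}[A_{\lambda^{(v)}}]\,\frac{\prod_{e\ni v}p_{m(e)}[S]}{p_{m(v)}[S]}\quad(v\in V_1),\qquad
p_{m(v)}[B'_{\lambda'^{(v)}}]=\frac{p_{m(v)}[B_{\lambda'^{(v)}}]}{p_{m(v)}[S]}\quad(v\in V_2),
\]
the $V_2$-side carrying no extra numerator. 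Since each edge of a bipartite graph is incident to exactly one vertex of $V_1$, multiplying these over $V$ yields
\[
w(\Gamma)(A',B')=w(\Gamma)(A,B)\,\frac{\prod_{e\in E}p_{m(e)}[S]}{\prod_{v\in V}p_{m(v)}[S]}=\frac{1}{S}\,c_\Gamma[S]\,w(\Gamma)(A,B).
\]
Substituting into Theorem~\ref{thm:logconvolution} and multiplying by $S$ gives the claimed formula for $L^S[A,B]$; the constant terms $A_{()},B_{()}$ are incorporated as in Section~\ref{sec:degree0}.

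For the divisibility $c_\Gamma[S]\in S^{b(\Gamma)}\Lambda$ — needed only when $\phi_{b(\Gamma)}(g(\Gamma))\neq0$, i.e.\ for $\Gamma$ that is primitive whenever it is a tree — I would use strong induction on $\#E$, repeating the two cases of the proof of Lemma~\ref{lem:composition} but tracking the powers of $S$. In the tree case, removing a leaf vertex $v$ with its edge $e$ (other endpoint $w$) and passing to the primitive graph $\Gamma''$ underlying the remainder gives
\[
c_\Gamma[S]=\frac{S\,p_{m(e)}[S]}{p_{m(v)}[S]\,p_{g'}[S]}\,p_{g'}[c_{\Gamma''}[S]],\qquad g'=g(\Gamma\setminus\{v,e\});
\]
since $m(v)\mid m(e)$, $g'\mid m(w)\mid m(e)$ and $\gcd(m(v),g')=g(\Gamma)=1$, one has $m(v)g'\mid m(e)$, so axioms (ii) and (iii) of Definition~\ref{defn:goodmodifier} give $p_{m(v)}[S]p_{g'}[S]\mid S\,p_{m(e)}[S]$ and hence $c_\Gamma[S]\in\Lambda=S^{0}\Lambda$, exactly as in that proof. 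In the non-tree case, removing an edge $e$ lying on a cycle gives a connected $\Gamma'=\Gamma\setminus e$ with $b(\Gamma')=b(\Gamma)-1$, and, with $\Gamma''$ its primitive form and $g'=g(\Gamma')$,
\[
c_\Gamma[S]=\frac{S\,p_{m(e)}[S]}{p_{g'}[S]}\,p_{g'}[c_{\Gamma''}[S]].
\]
Here $p_{m(e)}[S]/p_{g'}[S]\in\Lambda$ because $g'\mid m(e)$ (both endpoints of $e$ survive in $\Gamma'$), while $c_{\Gamma''}[S]\in S^{b(\Gamma)-1}\Lambda$ by the inductive hypothesis; and because $S\mid p_{g'}[S]$ by axiom (ii), the homomorphism $p_{g'}$ maps $S^{k}\Lambda$ into $S^{k}\Lambda$, so $p_{g'}[c_{\Gamma''}[S]]\in S^{b(\Gamma)-1}\Lambda$, and the explicit factor $S$ in front raises this to $S^{b(\Gamma)}\Lambda$, completing the induction.

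I expect the main, and essentially the only new, point past Lemma~\ref{lem:composition} to be precisely the last step of the non-tree case: one must observe that each cut of a back edge contributes one further factor of $S$, which works exactly because axiom (ii), $S\mid p_{g'}[S]$, forces $p_{g'}$ to preserve the $S$-adic filtration. The remaining items are routine and already implicit in the Lemma: that $\Gamma''$ again satisfies the inductive hypothesis (it is primitive, hence primitive if a tree, and has fewer edges); that the displayed relations among $c_\Gamma[S]$, $c_{\Gamma'}[S]$ and $c_{\Gamma''}[S]$ hold, using $c_{\Gamma'}[S]=S\,p_{g'}[c_{\Gamma''}[S]/S]$ and the ring-homomorphism property of $p_{g'}$ on $\Lambda_S$; and that all divisions are legitimate, which holds since each $p_n[S]$ is a non-zero-divisor by axiom (i), so $\Lambda\hookrightarrow\Lambda_S$.
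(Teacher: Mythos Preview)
Your proposal is correct and follows exactly the approach the paper intends. Since the statement is marked ``Corollary (of the proof)'' and the paper gives no separate argument, you have correctly identified that the formula is obtained by substituting $A',B'$ into Theorem~\ref{thm:logconvolution} and unpacking $w(\Gamma)(A',B')$ (your computation via $p_{m(v)}[p_{\lambda^{(v)}}[S]]=\prod_{e\ni v}p_{m(e)}[S]$ is precisely what the paper's line ``$S\,w(\Gamma)(A',B')$ is a rational multiple of $S\,\frac{S^\tau}{\prod_v p_{m(v)}[S]}A^\tau B^{\tau'}$'' encodes), and that the refinement $c_\Gamma[S]\in S^{b(\Gamma)}\Lambda$ comes from rerunning the same two-case induction while noting that each back-edge removal contributes an extra explicit factor of~$S$, with axiom~(ii) ensuring $p_{g'}$ preserves the $S$-adic filtration. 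This last observation is the only content beyond Lemma~\ref{lem:composition} itself, and you have spelled it out cleanly where the paper leaves it implicit.
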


Similarly, we can employ the second convolution formula, in this case to compute the trace of an operator.
\begin{defn}
An $S$-admissible operator $U$ is said to be of \emph{trace class} if the sequence of the coefficients of $L_U^S[X,Y]$ is well-behaved. In this case we define its trace by
\[
\Tr U:=\int_X^S K_U^S[X, X^*],
\]
where $\int_X^S$ is defined analogously to $\int_X$, but using the $S$-modified inner product.
\end{defn}
We have
\begin{thm}\label{thm:trace}
The trace of an $S$-admissible operator $U$ of trace class is admissible.
\end{thm}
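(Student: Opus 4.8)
The plan is to mirror the structure of Lemma~\ref{lem:composition}, but using the second convolution formula (Theorem~\ref{thm:logconvolution2}) in place of the first, so that the relevant combinatorial objects are directed (rather than bipartite) admissible graphs. By definition, $U$ being $S$-admissible of trace class means $K_U^S[X,Y]=\pExp\left[\frac{L_U^S[X,Y]}{S}\right]$ with $L_U^S[X,Y]\in\Lambda\otimes\Sym[[X,Y]]$ and well-behaved coefficients; write $L_U^S[X,Y]=\sum_{\lambda,\mu}A_{\lambda,\mu}p_\lambda[X]p_\mu[Y]$, so the $A_{\lambda,\mu}$ lie in $\Lambda$ and are well-behaved. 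The trace is $\Tr U=\int_X^S K_U^S[X,X^*]$, and unwinding the modified inner product $\int_X^S F[X]G[X^*]=(F[SX],G[X])_X=\int_X F[SX]G[X^*]$, this is exactly an instance of the directed exponential convolution $C(A')=\int_X\pExp[A'[X,X^*]]$ from Section~\ref{sec:convolution2} applied to the rescaled coefficients. Precisely, set $A'_{\lambda,\mu}=\frac{A_{\lambda,\mu}\,p_\lambda[S]}{S}$ (absorbing one factor of $S$ from the $X$-slot rescaling and dividing by $S$ from the single reproducing-kernel normalization, exactly as in the proof of Lemma~\ref{lem:composition}), so that $\Tr U=\pExp\!\left[\tfrac{1}{S}\cdot(\text{something})\right]$ once we show that something lies in $\Lambda$.

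The key step is then to apply Theorem~\ref{thm:logconvolution2} to $A'$: we get
\[
\frac{\Tr U}{S}\;=\;\sum_{\substack{\Gamma\;\text{directed,}\\\text{admissible, connected}}}\phi_{b(\Gamma)}(g(\Gamma))\,\frac{w(\Gamma)(A')}{\#\Aut(\Gamma)},
\]
where now $\Gamma$ ranges over directed admissible connected graphs (with loops and multiedges allowed). As in Lemma~\ref{lem:composition}, for each such $\Gamma=(V,E,m)$ with type $\tau=\{(k_i,\lambda^{(i)},\mu^{(i)})\}$, the weight $w(\Gamma)(A')$ is a rational multiple of $\frac{S^\tau}{\prod_{v\in V}p_{m(v)}[S]}\,A^\tau$ where $A^\tau=\prod_i p_{k_i}[A_{\lambda^{(i)},\mu^{(i)}}]$ and $S^\tau=\prod_{e\in E}p_{m(e)}[S]$ (here each directed edge still contributes one factor $p_{m(e)}[S]$ from the rescaling of the $X$-argument). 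So it suffices to prove
\[
c_\Gamma[S]:=S\,\frac{\prod_{e\in E}p_{m(e)}[S]}{\prod_{v\in V}p_{m(v)}[S]}\in\Lambda
\]
for every directed admissible connected $\Gamma$ with $\phi_{b(\Gamma)}(g(\Gamma))\neq0$, and moreover that $c_\Gamma[S]\in S^{b(\Gamma)}\Lambda$ — but this is precisely the content already established in Lemma~\ref{lem:composition} and Corollary~\ref{cor:Slogconv}, whose proof (induction on $\#E$, removing a leaf edge from a primitive tree or a cycle-edge otherwise, and invoking axioms (ii),(iii) of Definition~\ref{defn:goodmodifier}) makes no use of the bipartite structure and applies verbatim to directed graphs. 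Finally, one must check that the resulting $L^S_U$-trace series $L:=S\cdot\frac{\Tr U}{S}=\sum_\Gamma\phi_{b(\Gamma)}(g(\Gamma))\frac{w(\Gamma)(A)}{\#\Aut(\Gamma)}c_\Gamma[S]$ is a well-defined element of $\Lambda$, i.e.\ that the graph sum converges in the filtration topology; this follows from the trace-class hypothesis (well-behavedness of the $A_{\lambda,\mu}$) since each graph of total degree $N$ contributes coefficients in $J^k$ for $N$ large, exactly as convergence is handled throughout Section~\ref{sec:convolution}.

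The main obstacle, and the only place requiring genuine care, is the bookkeeping of the $S$-powers: one must verify that passing from the modified inner product $\int_X^S$ to the unmodified directed convolution $\int_X$ produces exactly the substitution $A_{\lambda,\mu}\mapsto\frac{A_{\lambda,\mu}p_\lambda[S]}{S}$ and not, say, $\frac{A_{\lambda,\mu}p_\lambda[S]p_\mu[S]}{S}$ or a different normalization — the asymmetry between the $X$-slot (which gets rescaled by $S$) and the $X^*$-slot (which does not) must be tracked against the single overall $\frac{1}{S}$ coming from the reproducing kernel, so that in the end each edge of $\Gamma$ contributes one $p_{m(e)}[S]$ and each vertex one $p_{m(v)}[S]^{-1}$ with one global leftover $S$, matching the $c_\Gamma[S]$ of Corollary~\ref{cor:Slogconv}. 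Once this is pinned down, integrality and the refined statement $L\in\Lambda$ are immediate from the already-proven graph-coefficient bound $c_\Gamma[S]\in S^{b(\Gamma)}\Lambda$, and admissibility of $\Tr U$ follows by definition.
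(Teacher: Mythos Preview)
Your proposal is correct and matches the paper's intended argument precisely. The paper does not write out a proof of Theorem~\ref{thm:trace} at all: it simply says ``Similarly, we can employ the second convolution formula'' and states the theorem, leaving it to the reader to rerun the proof of Lemma~\ref{lem:composition} with Theorem~\ref{thm:logconvolution2} in place of Theorem~\ref{thm:logconvolution}; you have carried this out in full, including the key observation that the integrality argument for $c_\Gamma[S]$ never uses bipartiteness and therefore applies verbatim to the directed graphs arising here.
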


\section{Admissibility of $\Delta$ and $\nabla$}
Now we set $S=S_{q,t}=-(1-q)(1-t)$, which is a good modifier in $\Q[q,t]$. The modified Macdonald polynomials $\{\tilde H_{\lambda}\}_{\lambda\in\cP}$ form a basis of $\Q(q,t)\otimes\Sym[X]$. For each $F\in \Q[q,t]\otimes\Sym[X]$ define $\Delta_{F}:\Q[q,t]\otimes\Sym[X] \to \Q(q,t)\otimes\Sym[X]$ in the Macdonald basis as follows:
\[
\Delta_{F} \tilde H_{\lambda} = F[B_\lambda],\quad B_\lambda = \sum_{(c,r)\in\lambda} q^c t^r,
\]
the summation is over the cells $(c,r)$ of $\lambda$, where $c$ is the column index and $r$ is the row index.

Over the slightly bigger ring $\Q(q,t)[[u]]$ define the operator
\[
\Delta_u = \sum_{n=0}^\infty (-u)^n \Delta_{e_n}.
\]
Then we have
\[
\Delta_u^{-1} = \sum_{n=0}^\infty u^n \Delta_{h_n}.
\]
Notice that
\[
\Delta_u \tilde H_\lambda = \prod_{r,c\in\lambda} (1-u q^c t^r)\;\tilde H_\lambda \quad(\lambda\in\cP).
\]
Note that $\Delta_u$ restricted to the space of symmetric functions of degree $k$ is a polynomial in $u$ of degree $k$. Its degree $k$ term is denoted by $\nabla$ (see \cite{bergeron1999identities} for an overview of results about this operator, and note that our $\nabla$ is different from the original one by a sign $(-1)^{|\lambda|}$),
\begin{equation}\label{eq:nabladef}
\nabla \tilde H_\lambda = (-1)^{|\lambda|} q^{n'(\lambda)} t^{n(\lambda)}\; \tilde H_\lambda,
\end{equation}
where $n(\lambda)=\sum_{i=1}^{l(\lambda)} (i-1)\lambda_i$, $n'(\lambda)= \sum_{i=1}^{l(\lambda)} \binom{\lambda_i}{2}$.
We also need the shift operators $\tau_u$:
\[
(\tau_u F)[X] = F[X+u],\quad \tau:=\tau_1,
\]
and their $S_{q,t}$-conjugates
\[
(\tau_u^* F)[X] = F[X] \pExp\left[u\,\frac{X}{S_{q,t}}\right],\quad \tau^*:=\tau_1^*.
\]
Also we have the following partially defined operation on operators. For any continuous operator $U$ denote by $\cS^{-1}(U)$, if it exists, the unique continuous operator satisfying
\[
\tau^*\tau\, U = \cS^{-1}(U)\, \tau^* \tau.
\]
Note that $\tau$ has it's image in $\Q[q,t]\otimes\Sym[X]$, $\tau^*$ is continuous, and both operators are $S_{q,t}$-admissible. 

Denote $\Delta_u' = \pExp[-u/S_{q,t}] \Delta_u$. The following identity was established in \cite{garsia2016five}:
\begin{equation}\label{eq:fiveterm}
\Delta_{v}^{-1} \tau_u \Delta_{v} \tau_u^{-1} = \nabla^{-1} \tau_{uv} \nabla
= \cS^{-1}(\Delta'_{uv}),
\end{equation}
where the operators act on $\Q(q,t)[[u,v]]\otimes\Sym[X]$. This result was motivated by a conjecture in \cite{bergeron2013tableaux}. In \cite{bergeron2013tableaux} the identity \eqref{eq:fiveterm} is shown to imply certain generalized Pieri rules for Macdonald polynomials. The main idea is that in the basis of Macdonald polynomials the operators $\Delta_v$ and $\nabla$ are easily described, while the operators $\tau_u$ are difficult. So we write
\[
\Delta_v^{-1} \tau_u \Delta_v = \nabla^{-1}  \tau_{uv} \nabla \tau_u,
\]
and then recursively express the result of a single application of the ``difficult''  $\tau$-operator on the left using double application of $\tau$-operators on the right. Our situation is the opposite, because we want to understand $\pLog$ of the kernel of our operators, so the $\tau$-operators are ``easy'', while the $\nabla$ and $\Delta$-operators are ``difficult''. Thus we employ the other implication of \eqref{eq:fiveterm}:
\[
\tau_u \Delta_{v} \tau_u^{-1} = \Delta_{v} \cS^{-1}(\Delta_{uv}'),\quad\text{equivalent to}
\]
\begin{equation}\label{eq:fiveterm2}
\tau_u \Delta_{v} \tau_u^{-1}\tau^*\tau = \Delta_{v} \tau^*\tau \Delta_{uv} \pExp\left[-\frac{uv}S_{q,t}\right].
\end{equation}
Let $L_v[X, Y] = L_{\Delta_v}^{S_{q,t}}[X,Y]\in\Q(q,t)[[v]]\otimes\Sym[[X,Y]]^+$.
We compute the kernels of both sides of \eqref{eq:fiveterm2}:
\[
\tau_u \Delta_{v} \tau_u^{-1}\tau^*\tau \pExp\left[\frac{XY}{S_{q,t}}\right]
=\tau_u \Delta_v \pExp\left[\frac{(X-u)(Y+1)+Y}{S_{q,t}}\right]
\]
\[
=\tau_u \pExp\left[\frac{L_v[X, Y+1] + Y(1-u)-u}{S_{q,t}}\right]
\]
\[
=\pExp\left[\frac{L_v[X+u, Y+1] + Y(1-u)-u}{S_{q,t}}\right],
\]
\[
\Delta_{v} \tau^*\tau \Delta_{uv} \pExp\left[\frac{XY}{S_{q,t}}\right]
= \Delta_{v} \tau^*\tau \pExp\left[\frac{L_{uv}[X, Y]}{S_{q,t}}\right]
\]
\[
= \Delta_{v} \tau^* \pExp\left[\frac{L_{uv}[X+1, Y]}{S_{q,t}}\right]
\]
\[
= \left(\pExp\left[\frac{L_v[X, Z+1]}{S_{q,t}}\right], \pExp\left[\frac{L_{uv}[Z+1, Y]}{S_{q,t}}\right]\right)^{S_{q,t}}_Z,
\]
where in the last formula we used the kernel for the operator $\Delta_v \tau^*$.

Define elements $A_{v,\lambda}\in \Q(q,t)[[v]]\otimes \Sym[[X]]$ for $\lambda\in\cP^*$ by the formula 
\[
L_v[X, Z+1] = \sum_{\lambda\in\cP^*} A_{v,\lambda}[X] p_\lambda[Z].
\]
Since the operator $\Delta_v$ preserves the degree of symmetric functions,
we have that its kernel $K_{\Delta_v}^{S_{q,t}}[X,Y]$, and therefore also the logarithm $L_v[X,Y]$, is a sum of terms whose degree in $X$ equals to the degree in $Y$. So we can write
\[
L_v[X, Y] = \sum_{k=1}^\infty L^{(k)}_v [X, Y],
\]
where $L_k[X,Y]$ has degrees $k$ both in $X$ and in $Y$.
It follows that $A_{v,\lambda}[X]$ is a sum of terms whose degree in $X$ is at least $|\lambda|$.

Note that the operator $\Delta_v$ is self-adjoint with respect to the $S_{q,t}$-modified inner product because Macdonald polynomials are orthogonal. This implies that $K_{\Delta_v}^{S_{q,t}}[X,Y]$, and therefore also $L_v[X,Y]$, is symmetric: $L_v[X,Y]=L_v[Y,X]$. We use this to expand
\[
L_{uv}[Z+1,Y] = L_{uv}[Y,Z+1] = \sum_{\lambda\in\cP^*} A_{uv,\lambda}[Y] p_\lambda[Z].
\]
Then \eqref{eq:fiveterm2} and the kernel evaluations above imply:
\[
L_v[X+u, Y+1] + Y(1-u) - u = L^{S_{q,t}}(\BA_v[X], \BA_{uv}[Y])-uv,
\]
for $L^{S_{q,t}}$ as in Corollary \ref{cor:Slogconv}. Here $\BA_v[X]$ denotes the collection of elements $\{A_{v,\lambda}[X]\}_{\lambda\in\cP^*}$.
Equivalently,
\begin{equation}\label{eq:fivetermlog}
L_v[X+u, Y+1] = Y(u-1) + u(1-v) + L^{S_{q,t}}(\BA_v[X], \BA_{uv}[Y]).
\end{equation}

Note that the left hand side is a sum of terms with
\[
 (\text{degree in $X$} \;+\; \text{degree in $u$}) = k,\quad 
 \text{degree in $Y$} \leq k\quad (k\in\Z_{>0}).
\]
Denote by $T_k[X,Y;u,v]$ the sum of the terms on the right hand side of \eqref{eq:fivetermlog} with 
\begin{equation}\label{eq:term degrees}
 (\text{degree in $X$} \;+\; \text{degree in $u$}) = k,\quad \text{degree in $u$}\geq 1,\;
 \text{degree in $Y$} \leq k-1.
\end{equation}
Clearly these terms also have the degree in $X$ $\leq k-1$, therefore $T_k$ can be computed from $L_v^{(i)}$ with $i\leq k-1$, using only terms of Corollary \ref{cor:Slogconv} with $|\Gamma|\leq k-1$. On the other hand, we can write down the sum of the terms satisfying \eqref{eq:term degrees} on the left hand side of \eqref{eq:fivetermlog} as follows:
\[
T_k[X, Y; u, v] = L'^{(k)}_v[X+u, Y] - L'^{(k)}_v[X, Y],
\]
where
\[
L'^{(k)}_v[X, Y] = L^{(k)}_v[X, Y+1] - L^{(k)}_v[X, Y].
\]
Notice that $L^{(k)}_v[X, Y]$ is homogeneous of degree $k$ in $Y$, and $L'^{(k)}_v[X, Y]$ is still homogeneous of degree $k$ in $X$. By the following well-known fact we can recover $L'^{(k)}_v[X, Y]$ from $T_k[X, Y; 1,v]$, and then $L^{(k)}_v[X,Y]$ from $L'^{(k)}_v[X,Y]$:
\begin{lem}[Solving a symmetric recursion]\label{lem:symrec}
For any $k\geq 1$
the map from symmetric functions of degree $k$ to symmetric functions of degree $<k$ given by
\[
F[X] \to F'[X]:=F[X+1]-F[X]
\]
is injective.
\end{lem}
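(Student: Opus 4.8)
The plan is to exhibit an explicit left inverse to the map $F \mapsto F'$ on the relevant graded piece. Concretely, I would work on the degree-$k$ piece $\Sym_k$ of $\Sym$, a finite-dimensional $\Q$-vector space with basis $\{p_\lambda\}_{\lambda \vdash k}$, and compute the effect of the operator $F[X] \mapsto F[X+1]$ on power sums. Since $p_n[X+1] = p_n[X] + 1$, we get
\[
p_\lambda[X+1] = \prod_{i} \bigl(p_{\lambda_i}[X] + 1\bigr) = \sum_{\mu \subseteq \lambda} p_\mu[X],
\]
where the sum runs over all sub-multisets $\mu$ of the parts of $\lambda$ (each part either kept or dropped). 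Hence $F'[X] = F[X+1]-F[X]$, applied to $p_\lambda$, yields $\sum_{\mu \subsetneq \lambda} p_\mu[X]$, the sum over proper sub-multisets. The key observation is that this is ``unitriangular after truncation'': writing $\lambda = (\lambda_1,\dots,\lambda_\ell)$ with $\lambda_1$ the largest part, the term obtained by dropping exactly one copy of the smallest part still has the same number of distinct large parts, so one can set up a well-founded ordering in which the ``leading term'' of $p'_\lambda$ is recoverable.

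The cleanest route is the following. Order partitions of $k$ of length $\le k$ refining length: among partitions $\mu$ with $|\mu| < k$, the ones appearing in $p'_\lambda$ all satisfy $|\mu| \le k - (\text{smallest part of }\lambda) < k$ and $\ell(\mu) \le \ell(\lambda)$, with $\ell(\mu) = \ell(\lambda)-1$ achieved precisely by the single partition $\lambda^- := (\lambda_1,\dots,\lambda_{\ell-1})$ obtained by deleting one smallest part (and, if the smallest part has multiplicity, possibly a few such — but all of the same length $\ell(\lambda)-1$). So I would argue: suppose $F = \sum_\lambda c_\lambda p_\lambda \in \Sym_k$ has $F' = 0$. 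Take $\lambda$ of maximal length $\ell$ among those with $c_\lambda \ne 0$. In $F' = \sum_\lambda c_\lambda p'_\lambda$, collect the coefficient of every $p_\mu$ with $\ell(\mu) = \ell - 1$; such $\mu$ can only arise from the $p'_\nu$ with $\ell(\nu) = \ell$ (longer $\nu$ are excluded by maximality; shorter $\nu$ produce only $p_\mu$ with $\ell(\mu) < \ell-1$). For fixed $\mu$ with $\ell(\mu)=\ell-1$, the partitions $\nu$ of length $\ell$ with $\mu = \nu^-$ are exactly those obtained by appending one extra part to $\mu$ of size $k - |\mu|$; this $\nu$ is uniquely determined by $\mu$. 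Therefore the coefficient of $p_\mu$ in $F'$ is (a positive integer multiple of) $c_\nu$, forcing $c_\nu = 0$ for all such $\nu$ — contradicting that some $c_\lambda \ne 0$ with $\ell(\lambda) = \ell$. Hence $F = 0$, proving injectivity.

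The only genuinely delicate point is the multiplicity bookkeeping: when $\lambda$ has a repeated smallest part, $p'_\lambda$ contains $p_{\lambda^-}$ with a combinatorial multiplicity equal to that of the smallest part (and deleting two or more smallest parts gives shorter partitions, harmless for the length-$(\ell-1)$ analysis). I expect this to be the main obstacle to a fully clean write-up, but it is purely bookkeeping: the multiplicities are positive integers, so over $\Q$ they are invertible and the unitriangularity argument goes through unchanged. An alternative, perhaps even shorter, is to note that $F \mapsto F[X+1]$ is the algebra automorphism of $\Sym$ sending $p_n \mapsto p_n + 1$, hence invertible, and $F \mapsto F'$ is this automorphism minus the identity; on the finite-dimensional space $\bigoplus_{j \le k}\Sym_j$ one checks directly that $F[X+1] - F[X]$ has trivial kernel in degree exactly $k$ because the automorphism strictly lowers the top degree of a nonzero homogeneous element by its structure, giving injectivity. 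Either argument suffices; I would present the power-sum triangularity version as it is the most self-contained.
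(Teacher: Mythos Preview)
Your main argument via the power-sum basis is correct: for $F=\sum_{|\lambda|=k} c_\lambda p_\lambda$ with $F'=0$, taking $\ell$ maximal among lengths with $c_\lambda\neq 0$ and reading off the coefficient of $p_\mu$ for each $\mu$ of length $\ell-1$ recovers $c_\nu$ (up to a positive integer factor) for the unique $\nu=\mu\cup\{k-|\mu|\}$, forcing all $c_\nu=0$ in length $\ell$, a contradiction. The multiplicity bookkeeping you flag is genuine but harmless over $\Q$, exactly as you say.

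The paper takes a different and slightly slicker route: it works in the \emph{monomial} basis, where the analogous expansion
\[
m_\lambda[X+1]-m_\lambda[X]=\sum_{a}\,m_{\lambda\setminus a}[X],
\]
the sum running over the \emph{distinct} part-values $a$ of $\lambda$, has all coefficients equal to $1$. This lets one write down an explicit left inverse in one line: send $m_\mu$ (with $|\mu|<k$) to $m_{(k-|\mu|,\mu_1,\dots,\mu_{l(\mu)})}$ when $k-|\mu|\geq\mu_1$, and to $0$ otherwise. Your power-sum argument trades this explicitness for an induction on length and the need to divide by multiplicities; both are short and elementary, but the monomial-basis version sidesteps the bookkeeping entirely.

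One caution: your closing ``alternative'' sketch is not a proof as written. The automorphism $F[X]\mapsto F[X+1]$ does not lower top degree; rather, for homogeneous $F$ of degree $k$ the top-degree part of $F[X+1]$ equals $F[X]$, so $F'$ has degree $<k$ --- but that is the statement to be proved (that this drop in degree is lossless), not a proof of it. Stick with your triangularity argument.
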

\begin{proof}
In the monomial basis, for $\lambda\in\cP_k$ we have
\[
m_\lambda[X+1]-m_\lambda[X] = \sum_{j=1}^r m_{\lambda_1,\lambda_2,\ldots, \widehat{\lambda_{i_j}},\ldots,\lambda_r}[X],
\]
where $i_1, i_2,\ldots,i_r$ are such that the sequence $\lambda_{i_1},\lambda_{i_2},\ldots, \lambda_{i_r}$ contains each element of the set of parts of $\lambda$ exactly once.  Thus, for instance, the map sending $m_{\lambda}$ to $m_{\lambda'}$ with $\lambda'=(k-|\lambda|, \lambda_1,\ldots,\lambda_{l(\lambda)})$ when $k-|\lambda|\geq \lambda_1$, and to $0$ otherwise is a left inverse for the map $F\to F'$.
\end{proof}

Thus we see that we can inductively compute each $L_v^{(k)}$ using only operations in the ring $\Q[q,t,v]$, which implies
\begin{thm}\label{thm:admdelta}
The operator $\Delta_v$ is $S_{q,t}$-admissible over $\Q[q,t,v]$.
\end{thm}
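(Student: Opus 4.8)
The plan is to turn the recursive scheme already laid out in the text into a genuine induction on the degree $k$. The key observation is that the five-term identity \eqref{eq:fivetermlog}, rewritten as \eqref{eq:fivetermlog} in the form
\[
L_v[X+u, Y+1] = Y(u-1) + u(1-v) + L^S(A_v[X], A_{uv}[Y]),
\]
is not just a formal identity but an identity in which the right-hand side, when restricted to the terms contributing to $T_k[X,Y;u,v]$, involves only the pieces $L_v^{(i)}$ with $i\leq k-1$. This is because the graph-theoretic formula of Corollary \ref{cor:Slogconv} expresses $L^S(A_v, A_{uv})$ as a sum over connected admissible graphs $\Gamma$ weighted by $w(\Gamma)(A_v, A_{uv})\, c_\Gamma[S]/\#\Aut(\Gamma)$, and the only way to produce a term of degree $k$ in $X$ together with degree $\geq 1$ in $u$ while keeping the degree in $Y$ at most $k-1$ is to use building blocks $A_{v,\lambda}[X]$ of total $X$-degree at most $k-1$, i.e. coefficients of $L_v^{(i)}$ for $i<k$. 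Thus $T_k$ is computed entirely within $\Lambda=\Q[q,t,v]$ from the inductive data.

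Concretely, I would proceed as follows. \emph{Base case:} for $k=1$ one checks directly from \eqref{eq:fivetermlog} (or from the explicit action of $\Delta_v$ on $\tilde H_{(1)}=X$) that $L_v^{(1)}[X,Y]\in\Q[q,t,v]\otimes\Sym[[X,Y]]$; this is an easy computation. \emph{Inductive step:} assume $L_v^{(i)}\in\Lambda\otimes\Sym[[X,Y]]$ for all $i<k$. Then by Corollary \ref{cor:Slogconv} (applied with the good modifier $S=-(1-q)(1-t)$ over $\Lambda=\Q[q,t,v]$, which is indeed a good modifier as noted in the text) the quantity $L^S(A_v[X], A_{uv}[Y])$ has coefficients in $\Lambda$ in the relevant range of degrees, hence $T_k[X,Y;u,v]\in\Lambda\otimes\Sym[[X,Y]][u]$. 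Setting $u=1$ gives $T_k[X,Y;1,v]=L'^{(k)}_v[X+1,Y]-L'^{(k)}_v[X,Y]$, where $L'^{(k)}_v[X,Y]=L^{(k)}_v[X,Y+1]-L^{(k)}_v[X,Y]$. Now apply Lemma \ref{lem:symrec} twice: first, viewing everything as a symmetric function of degree $k$ in $X$ (with coefficients that are symmetric functions in $Y$ over $\Lambda$), the explicit left inverse constructed in the proof of Lemma \ref{lem:symrec} recovers $L'^{(k)}_v[X,Y]$ from $T_k[X,Y;1,v]$ while staying in $\Lambda\otimes\Sym[[X,Y]]$; second, the same left inverse applied in the $Y$-variable recovers $L^{(k)}_v[X,Y]$ from $L'^{(k)}_v[X,Y]$, again without leaving $\Lambda$. (One should note that $L^{(k)}_v$ has no term of degree $0$ in $Y$ — since $\Delta_v$ is degree-preserving and the kernel is built from $\pExp[XY/S]$, the only degree-$0$-in-$Y$ contribution is the constant $1$, already accounted for — so the recursion in $Y$ is a genuine finite one, i.e.\ $L'^{(k)}_v$ determines $L^{(k)}_v$.) This shows $L^{(k)}_v\in\Lambda\otimes\Sym[[X,Y]]$, completing the induction, and hence $L_v[X,Y]=L_{\Delta_v}^S[X,Y]\in\Q[q,t,v]\otimes\Sym[[X,Y]]^*$, which is exactly the statement that $\Delta_v$ is $S$-admissible over $\Q[q,t,v]$. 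One also checks that the constant term $L_v^S[0,0]$ vanishes (again by degree-preservation), so the well-behavedness condition in the definition of admissibility is trivially satisfied.

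The main obstacle, and the point requiring the most care, is the bookkeeping in the inductive step: one must verify precisely that every graph $\Gamma$ appearing in Corollary \ref{cor:Slogconv} whose weight contributes to the relevant degree range of $T_k$ really does so using only $A_{v,\lambda}$ with $|\lambda|\leq k-1$ (so that the induction closes), \emph{and} that the substitution $u=1$ together with the double application of the left inverse of Lemma \ref{lem:symrec} is legitimate — in particular that no division by a nonunit of $\Lambda$ sneaks in. The first part is a degree count: a term of $w(\Gamma)(A_v,A_{uv})$ of $X$-degree $d_X$, $Y$-degree $d_Y$, $u$-degree $d_u$ satisfies $d_X + d_u$ equal to the total $X$-plus-$(Z{=}1)$-degree coming from the $A_v$-factors and $d_Y+d_u$ the analogous count on the $A_{uv}$-side; the constraint $d_X+d_u=k$, $d_u\geq 1$, $d_Y\leq k-1$ forces each $A_v$-factor (resp. $A_{uv}$-factor) to come from some $L_v^{(i)}$ with $i<k$. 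The second part is exactly where the constructive, integral nature of Lemma \ref{lem:symrec}'s proof is essential: the left inverse exhibited there (sending $m_\lambda\mapsto m_{(k-|\lambda|,\lambda)}$ or $0$) has integer — in particular $\Lambda$-linear — coefficients, so no denominators are introduced. Everything else is routine.
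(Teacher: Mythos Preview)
Your proposal is correct and follows essentially the same approach as the paper: the paper's proof is precisely the recursive scheme laid out in the text preceding the theorem statement, culminating in the sentence ``Thus we see that we can recursively compute each $L_v^{(k)}$ using only operations in the ring $\Lambda=\Q[q,t,v]$,'' and your plan makes this induction explicit with the same ingredients (the degree bookkeeping showing $T_k$ depends only on $L_v^{(i)}$ for $i<k$, Corollary \ref{cor:Slogconv} for integrality of $L^S$, and the double application of Lemma \ref{lem:symrec}). Your degree-counting paragraph is slightly garbled --- the variable $u$ enters the right-hand side only through the $A_{uv}$-factors, so $d_X$ comes entirely from the $A_v$-side and $d_u$ entirely from the $A_{uv}$-side; the constraint $d_X+d_u=k$, $d_u\geq 1$ then gives $d_X\leq k-1$ directly, and together with $d_Y\leq k-1$ this is what forces both sides to use only $L_v^{(i)}$ with $i<k$ --- but this is exactly the paper's one-line observation ``Clearly these terms also have the degree in $X$ $\leq k-1$.''
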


The coefficients of $L_\nabla^{S_{q,t}}$ can be extracted from the coefficients of $L_{\Delta_v}^{S_{q,t}}$. They are simply given by the terms of $L_{\Delta_v}^{S_{q,t}}$ whose degree in $v$ equals to the degree in $X$, and hence also equals to the degree in $Y$. So we have
\begin{cor}\label{cor:nabla}
The operator $\nabla$ is $S_{q,t}$-admissible over $\Q[q,t]$.
\end{cor}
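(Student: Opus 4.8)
The plan is to deduce Corollary \ref{cor:nabla} from Theorem \ref{thm:admdelta} by extracting, inside each graded piece, the coefficient of the top power of $v$, using that $\nabla$ is precisely this leading coefficient of $\Delta_v$. Concretely: since $\Delta_v\tilde H_\lambda=\prod_{r,c\in\lambda}(1-vq^ct^r)\tilde H_\lambda$, on the degree-$d$ component of $\Lambda\otimes\Sym[X]$ the operator $\Delta_v$ acts by a polynomial in $v$ of degree $d$, and by \eqref{eq:nabladef} its coefficient of $v^d$ is $(-1)^{|\lambda|}q^{n'(\lambda)}t^{n(\lambda)}\tilde H_\lambda=\nabla\tilde H_\lambda$ on each $\tilde H_\lambda$ with $|\lambda|=d$. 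Passing to kernels and writing $\pExp[XY/S]=\sum_{\lambda}\tilde H_\lambda[X]\tilde H_\lambda[Y]/(\tilde H_\lambda,\tilde H_\lambda)^S$ (orthogonality of $\{\tilde H_\lambda\}$ for $(\cdot,\cdot)^S$), one gets
\[
K^S_{\Delta_v}[X,Y]=\sum_\lambda\frac{\prod_{r,c\in\lambda}(1-vq^ct^r)}{(\tilde H_\lambda,\tilde H_\lambda)^S}\tilde H_\lambda[X]\tilde H_\lambda[Y],\qquad K^S_{\nabla}[X,Y]=\sum_\lambda\frac{(-1)^{|\lambda|}q^{n'(\lambda)}t^{n(\lambda)}}{(\tilde H_\lambda,\tilde H_\lambda)^S}\tilde H_\lambda[X]\tilde H_\lambda[Y].
\]
Thus every monomial of $K^S_{\Delta_v}$ has equal degree $d$ in $X$ and $Y$ and degree $\le d$ in $v$, and the coefficient of $v^d$ in the degree-$d$ part of $K^S_{\Delta_v}$ is the degree-$d$ part of $K^S_{\nabla}$.

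Next I would make the extraction precise. Let $\mathcal D$ be the subset of the completed tensor product $\Q[q,t,v]\otimes\Sym[[X,Y]]$ --- with $q,t,v$ regarded as degree-$1$ monomials, so that the Adams operations $p_n$ multiply the $X$-, $Y$- and $v$-degrees simultaneously by $n$ --- consisting of the elements all of whose monomials have equal degree $d$ in $X$ and in $Y$ and degree $\le d$ in $v$. Then $\mathcal D$ is a sub-$\lambda$-ring (closed under products, $p_n$, $\pExp$, $\pLog$, and division by degree-$0$ scalars such as $S$ and the $p_n[S]$). Define $\Phi\colon\mathcal D\to\Q[q,t]\otimes\Sym[[X,Y]]$ by replacing the degree-$d$ component of its argument by the coefficient of $v^d$ there. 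It is immediate that $\Phi$ is additive, and it is multiplicative because in a product of homogeneous components of degrees $a$ and $b$ with $a+b=d$ the only contribution to $v^d$ comes from pairing $v^a$ in the first factor with $v^b$ in the second; and $\Phi$ commutes with each $p_n$ since $p_n$ rescales the three gradings simultaneously. Hence $\Phi$ is a continuous $\lambda$-ring homomorphism on $\mathcal D$, so it commutes with $\pExp$, $\pLog$ and with division by $S$.

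Finally, by Theorem \ref{thm:admdelta} we have $K^S_{\Delta_v}[X,Y]=\pExp[L^S_{\Delta_v}[X,Y]/S]$ with $L^S_{\Delta_v}\in\Q[q,t,v]\otimes\Sym[[X,Y]]$, and the degree estimates above put $K^S_{\Delta_v}$, hence also $L^S_{\Delta_v}=S\,\pLog K^S_{\Delta_v}$, in $\mathcal D$. Applying $\Phi$ and using the kernel identity above,
\[
K^S_{\nabla}[X,Y]=\Phi\!\left(K^S_{\Delta_v}[X,Y]\right)=\pExp\!\left[\frac{\Phi\!\left(L^S_{\Delta_v}[X,Y]\right)}{S}\right],\qquad \Phi\!\left(L^S_{\Delta_v}\right)\in\Q[q,t]\otimes\Sym[[X,Y]].
\]
As $L^S_{\Delta_v}$ has no constant term, neither does $L^S_{\nabla}:=\Phi(L^S_{\Delta_v})$, which is therefore trivially well-behaved; so $\nabla$ is $S$-admissible over $\Q[q,t]$, and $L^S_{\nabla}$ is obtained from $L^S_{\Delta_v}$ by keeping exactly the terms whose degree in $v$ equals their degree in $X$ (equivalently in $Y$) and setting $v=1$. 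The only step requiring genuine care is this bookkeeping --- checking that $K^S_{\Delta_v}$ and, especially, $L^S_{\Delta_v}=S\,\pLog K^S_{\Delta_v}$ lie in $\mathcal D$, and that $\Phi|_{\mathcal D}$ is honestly a $\lambda$-ring homomorphism commuting with $\pExp$; once that is in place, the corollary is immediate from Theorem \ref{thm:admdelta} and the eigenvalue formula \eqref{eq:nabladef}.
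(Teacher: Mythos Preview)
Your proposal is correct and follows the same idea as the paper: the paper's argument is the single observation that the coefficients of $L^S_{\nabla}$ are obtained from $L^S_{\Delta_v}$ by keeping the terms whose $v$-degree equals the $X$-degree (equals the $Y$-degree). You spell out carefully the bookkeeping that makes this extraction compatible with $\pExp$ (via the $\lambda$-ring homomorphism $\Phi$), which the paper leaves implicit, but the underlying approach is identical.
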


Finally, the inverse of $\nabla$ is related to $\nabla$ by the identity
\[
\bar\omega \nabla^{-1} = \nabla \bar\omega,
\]
where $\bar\omega$ is the $\lambda$-ring automorphism of $\Q[q,t,q^{-1},t^{-1}]\otimes\Sym[X]$ which sends $q, t, X$ to $q^{-1}, t^{-1}, -X$. Thus we also have
\begin{cor}\label{cor:nablainv}
The operator $\nabla^{-1}$ is $S_{q,t}$-admissible over the ring $\Q[q,t,q^{-1},t^{-1}]$.
\end{cor}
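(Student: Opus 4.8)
The plan is to derive this from Corollary~\ref{cor:nabla} together with Theorem~\ref{thm:composition}, using only the displayed identity $\bar\omega\nabla^{-1}=\nabla\bar\omega$. Since $\bar\omega$ squares to the identity, that relation is equivalent to
\[
\nabla^{-1}=\bar\omega\,\nabla\,\bar\omega .
\]
Set $\Lambda:=\Q[q,t,q^{-1},t^{-1}]$. By Corollary~\ref{cor:nabla} the operator $\nabla$ is $S$-admissible over $\Q[q,t]$, hence also over $\Lambda$: indeed $S=-(1-q)(1-t)$ is still a good modifier over $\Lambda$ (the divisibilities in Definition~\ref{defn:goodmodifier} are inherited from $\Q[q,t]$), and $L^S_\nabla\in\Q[q,t]\otimes\Sym[[X,Y]]\subset\Lambda\otimes\Sym[[X,Y]]$. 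So it suffices to show that $\bar\omega$, regarded as an operator on $\Lambda\otimes\Sym[X]$, is $S$-admissible over $\Lambda$ and composable with $\nabla$; two applications of Theorem~\ref{thm:composition} then give the result.

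For the admissibility of $\bar\omega$ the single computation that matters is
\[
\bar\omega(S)\;=\;-(1-q^{-1})(1-t^{-1})\;=\;\frac{S}{qt},
\]
so that $\bar\omega(1/S)=qt/S$ in $\Lambda$. Extending $\bar\omega$ to the auxiliary variable $Y$ by the identity keeps it a $\lambda$-ring automorphism, so
\[
K^S_{\bar\omega}[X,Y]\;=\;\bar\omega\,\pExp\!\left[\frac{XY}{S}\right]\;=\;\pExp\!\left[\bar\omega\!\left(\frac{XY}{S}\right)\right]\;=\;\pExp\!\left[\frac{-qt\,XY}{S}\right],
\]
using $\bar\omega(XY)=\bar\omega(X)\bar\omega(Y)=-XY$. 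Thus $L^S_{\bar\omega}[X,Y]=-qt\,XY\in\Lambda\otimes\Sym[[X,Y]]$ with zero (hence well-behaved) constant term, so $\bar\omega$ is $S$-admissible over $\Lambda$. Composability is immediate: $\bar\omega$ maps $\Lambda\otimes\Sym[X]$ into itself (it is finite) and preserves the augmentation ideal (it is continuous), and $\nabla$ is likewise finite and continuous, so every composition occurring in $\bar\omega\nabla\bar\omega$ is legitimate.

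If one prefers to avoid quoting Theorem~\ref{thm:composition}, the same bookkeeping can be done by hand. From $\nabla^{-1}=\bar\omega\nabla\bar\omega$ one gets
\[
K^S_{\nabla^{-1}}[X,Y]\;=\;\bar\omega\,\nabla\,\bar\omega\,\pExp\!\left[\frac{XY}{S}\right]\;=\;\bar\omega\big(K^S_\nabla[X,-qt\,Y]\big),
\]
and substituting $p_n[Y]\mapsto(-qt)^n p_n[Y]$ keeps $L^S_\nabla$ inside $\Q[q,t]\otimes\Sym[[X,Y]]$, while $\bar\omega$ carries $\Q[q,t]$ into $\Q[q^{-1},t^{-1}]$ and multiplies $S$ by the unit $qt$. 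Reading this off gives $L^S_{\nabla^{-1}}[X,Y]=qt\cdot\bar\omega\big(L^S_\nabla[X,-qt\,Y]\big)\in\Lambda\otimes\Sym[[X,Y]]$, with constant term $0$ since $\nabla(1)=1$ forces $L^S_\nabla[X,0]=0$. I do not anticipate a genuine obstacle here: all the substance is already contained in Corollary~\ref{cor:nabla}, and what remains is tracking how $\bar\omega$ rescales $S$. The one point worth flagging is that this rescaling introduces the denominator $qt$, which is exactly why the conclusion is stated over $\Q[q,t,q^{-1},t^{-1}]$ and not over $\Q[q,t]$.
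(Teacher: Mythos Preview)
Your second, direct computation is exactly the paper's argument and reproduces its displayed formula
\[
L^S_{\nabla^{-1}}[X,Y;q,t]=qt\,L^S_\nabla\!\big[-X,\,-(qt)^{-1}Y;\,q^{-1},\,t^{-1}\big],
\]
so that part is correct and complete.

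The first route, via Theorem~\ref{thm:composition}, does not go through as written. The kernel formalism and the composition identity~\eqref{eq:compositionkernel} are set up for $\Lambda$-linear operators: the reproducing property $(K^S_U[X,Y],F[Y])^S_Y=(UF)[X]$ already fails if $U$ moves elements of $\Lambda$. Since $\bar\omega$ sends $q\mapsto q^{-1}$, $t\mapsto t^{-1}$, it is \emph{not} $\Lambda$-linear. The element $\pExp[-qtXY/S]$ you obtained is in fact the kernel of the $\Lambda$-linear substitution $F[X]\mapsto F[-qtX]$, not of $\bar\omega$; feeding it into Theorem~\ref{thm:composition} would compute the kernel of $F\mapsto(\nabla F)[-qtX]$ and then of $F[X]\mapsto(\nabla F[-qt\,\cdot\,])[-qtX]$, neither of which is $\nabla^{-1}$. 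So the appeal to Theorem~\ref{thm:composition} should be dropped; your explicit calculation already does all the work. (Minor slip: the substitution $Y\mapsto -qtY$ sends $p_n[Y]$ to $-(qt)^n p_n[Y]$, not $(-qt)^n p_n[Y]$, but this does not affect the conclusion since both lie in $\Q[q,t]$.)
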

In fact, the kernel of $\nabla^{-1}$ can be computed from the one of $\nabla$ as follows:
\[
L^{S_{q,t}}_{\nabla^{-1}}[X, Y; q, t] = q\, t\; L^{S_{q,t}}_{\nabla}[-X,\; -(qt)^{-1} Y;\; q^{-1},\; t^{-1}].
\]

\subsection{Example of computation}
We give the first few steps of the computation. So we start with $k=1$ and the initial approximation $L_v[X,Y]\approx 0$. This gives us the first value
\[
T_1[X, Y, u, v] = u(1-v).
\]
Solving the symmetric recursion for $k=1$, $T_1[X, Y, 1, v] = (1-v)$ gives
\[
L_v^{(1)}[X, Y] = X Y (1-v).
\]

Now we proceed to $k=2$, $L_v[X,Y]\approx X Y (1-v)$. We need to compute the operation $L^S$ for 
\[
L_v[X, Z+1] \;\approx\; X(Z+1)(1-v) = X(1-v) + X(1-v) Z,
\]
\[
 L_{uv}[Y, Z+1] \;\approx\; Y(1-uv) + Y(1-uv) Z.
\]
The graphs of degrees $0$ and $1$ (see Section \ref{sec:exgraphs}) produce
\[
X(1-v) + Y(1-uv) + X(1-v)Y(1-uv),
\]
which together with the extra summand $Y(u-1)+u(1-v)$ from \eqref{eq:fivetermlog} gives
\[
(1-v)((X+u)(Y+1)- uv X Y).
\]
Keeping only the terms satisfying the degree restrictions \eqref{eq:term degrees} we obtain
\[
T_2[X, Y;u, v] = u v (v-1) X Y,
\]
and the solution to the symmetric recursion is
\[
L_v^{(2)}[X, Y] =   v(v-1) e_2[X] e_2[Y].
\]

We will compute one more step $k=3$. Expansion of $L_v[X, Z+1]$ in the power sum basis is
\[
L_v[X, Z+1] \;\approx\; X(1-v) + X(1-v) Z + v(v-1) e_2[X]\left(\frac{Z^2-p_2[Z]}2 + p_1[Z]\right),
\]
\[
L_{uv}[Y, Z+1] \;\approx\; Y(1-uv) + Y(1-uv) Z + uv(uv-1) e_2[Y]\left(\frac{Z^2-p_2[Z]}2 + p_1[Z]\right).
\]
So the logarithmic convolution is 
\[
X(1-v) + Y(1-uv) + (X(1-v) + v(v-1) e_2[X])\,(Y(1-uv) + uv(uv-1) e_2[Y])
\]
\[
\frac12 v(v-1) e_2[X] uv(uv-1) e_2[Y] (q+1)(t+1) -\frac12 v(v-1) e_2[X] p_2[Y(1-uv)]
\]
\[
 -\frac12 p_2[X(1-v)]  uv(uv-1) e_2[Y] 
- \frac12 v(v-1) e_2[X] uv(uv-1) e_2[Y] (q-1)(t-1)
\]
\[
+\frac12 v(v-1) e_2[X] (Y(1-uv))^2 + \frac12 uv(uv-1) e_2[Y] (X(1-v))^2,
\]
where we kept only terms with degrees in $X$ and $Y$ not exceeding $2$. Keeping only the terms satisfying the degree restrictions \eqref{eq:term degrees} and setting $u=1$ we obtain
\[
T_2[X,Y;1,v]=v(v-1) (1-v(q+t)) e_2[X] e_2[Y] 
-v^2 (v-1) (e_2[Y] X^2 + e_2[X] Y^2) 
\]
\[
 - v^2(v-1)(X e_2[Y] + Y e_2[X]).
\]
Next, we pass to the monomial basis in order to apply the procedure from the proof of Lemma \ref{lem:symrec}. We throw away the terms $m_{(1,1)}[X] m_{(2)}[Y]$, because they will go to $0$ anyway. We are left with
\[
v(v-1) (1-v(q+t+4)) m_{(1,1)}[X] m_{(1,1)}[Y] - v^2(v-1) (X m_{(1,1)}[Y] + Y m_{(1,1)}[X]),
\]
So we find
\[
L_v^{(3)} = v(v-1) (1-v(q+t+4))\, m_{(1,1,1)}[X]\, m_{(1,1,1)}[Y]
\]
\[
- v^2(v-1) \left(m_{(2,1)}[X]\, m_{(1,1,1)}[Y] \,+\, m_{(1,1,1)}[X]\, m_{(2,1)}[Y]\right).
\]
Note that replacing $v$ by $-v$ makes all the coefficients positive. This is expected to hold in general.

\section{HLV kernels}\label{sec:HLV}
Fix integers $g,n\geq 0$. In what follows, the variables $q$, $t$, $T$, $u_i$ are monomial, i.e $p_k[q]=q^k$ for all $k$, and similarly for $t$, $T$ and $u_i$ for all $i$. The exponential HLV kernel of genus $g$ with $n$ punctures (see \cite{hausel2011arithmetic}, \cite{carlsson_vertex_2016}) is defined as 
\begin{equation}\label{eq:omega}
\Omega_{u_1,u_2,\ldots, u_g}[X_1,X_2,\ldots, X_n;q,t,T]=\sum_{\lambda\in\cP} \frac{\prod_{i=1}^n \tilde H_\lambda[X_i;q,t] \prod_{i=1}^g N_{\lambda}(u_i;q,t)}{(\tilde H_\lambda, \tilde H_\lambda)^S} T^{|\lambda|},
\end{equation}
where
\[
N_\lambda(u;q,t) = (-u)^{-|\lambda|} q^{n(\lambda')} t^{n(\lambda)}\prod_{s\in \lambda} (1-u q^{-a(s)} t^{l(s)+1}) (1-u t^{-l(s)} q^{a(s)+1}),
\]
the variable $s$ runs over the cells of $\lambda$, and $a(s)$, $l(s)$ denote the arm and the leg lengths of $s$ correspondingly. Note that for $n=0$ the variable $T$ is necessary for convergence. For $n>0$ we can set $T=1$ without loosing any information: the power of $T$ is always equal to the degree in any of the variables $X_1$, $X_2$,\ldots. The kernel belongs to the $\lambda$-ring
\[
\Q(q,t)[u_1,u_2,\ldots,u_g, u_1^{-1}, u_2^{-1},\ldots,u_g^{-1}][[T]]\otimes\Sym[[X_1,X_2,\ldots,X_n]].
\]

In \cite{carlsson_vertex_2016} a more convenient formula for $N_\lambda$ is given. We sketch a proof of this formula. It is convenient to use the notation (remember $S_{q,t}=-(1-q)(1-t)$)
\[
D_\lambda(q,t)=-1-S_{q,t} B_\lambda,\quad \bar D_\lambda(q,t) = D_\lambda(q^{-1}, t^{-1}).
\]
Then it is not hard to prove that:
\[
qt\frac{D_\lambda \bar D_\lambda-1}{S_{q,t}} = \sum_{s\in \lambda} q^{-a(s)} t^{l(s)+1} + t^{-l(s)} q^{a(s)+1},
\]
which implies
\[
N_\lambda(u;q,t) = (-u)^{-|\lambda|} q^{n(\lambda')} t^{n(\lambda)} \pExp\left[-qt\frac{D_\lambda \bar D_\lambda-1}{S_{q,t}}u\right] \in \Q[q,t,q^{-1},t^{-1}]((u)).
\]
We will use Tesler's operator (see \cite{garsia1999explicit}, \cite{mellit2016plethystic}), defined as $\nabla \tau^* \tau$, whose main property is:
\begin{equation}\label{eq:tesler}
\nabla \tau^* \tau \tilde H_\lambda = \pExp\left[\frac{D_\lambda X}{S_{q,t}}\right].
\end{equation}
Applying the operator $\bar\omega$ on both sides we obtain
\[
\nabla^{-1} \tau^{*}_{-qt} \tau_{-1} \nabla^{-1} \tilde H_\lambda = \pExp\left[-qt\frac{\bar D_\lambda X}{S_{q,t}}\right].
\]
We substitute $uX$ in the place of $X$ in the former expression, and take the $S$-modified scalar product with the latter expression:
\[
\left(\nabla \tau^*_u (\tilde H_\lambda[u X + 1]),\; \nabla^{-1} \tau^{*}_{-qt} \tau_{-1} \nabla^{-1} \tilde H_\lambda\right)^{S_{q,t}} = \pExp\left[-qt u\frac{D_\lambda \bar D_\lambda}{S_{q,t}}\right].
\]
Now we use the definition of $\nabla$ from \eqref{eq:nabladef} and the fact that the degree of $\tilde H_\lambda$ is $|\lambda|$:
\[
N_\lambda(u) = \left(\nabla \tau^*_u \tau_{u^{-1}} \tilde H_\lambda,\; \nabla^{-1} \tau^{*}_{-qt} \tau_{-1} \tilde H_\lambda\right)^{S_{q,t}} \pExp\left[\frac{uqt}{S_{q,t}}\right].
\]
Since $\nabla$ is self-adjoint with respect to the $S_{q,t}$-scalar product, we can remove it from both sides:
\[
N_\lambda(u) = \left(\tau^*_u \tau_{u^{-1}} \tilde H_\lambda,\; \tau^{*}_{-qt} \tau_{-1} \tilde H_\lambda\right)^{S_{q,t}} \pExp\left[\frac{uqt}{S_{q,t}}\right].
\]
Finally, we move $\tau^*_u$ to the right, where it becomes $\tau_u$, and pass it through $(\tau^{*}_{qt})^{-1}$, which gives an extra factor of $\pExp\left[-\frac{uqt}{S_{q,t}}\right]$. Then we move $(\tau^{*}_{qt})^{-1}$ to the left:
\begin{equation}\label{eq:N}
N_\lambda(u) = \left(\tau_{u^{-1}-qt}\tilde H_\lambda,\; \tau_{u-1} \tilde H_\lambda\right)^{S_{q,t}}.
\end{equation}
The identity is understood in the ring $\Q[q,t][u,u^{-1}]$. We are ready to prove our main result
\begin{thm}
For any $n,g\geq 0$ the exponential HLV kernel of genus $g$ with $n$ punctures is $S_{q,t}$-admissible over the ring
\[
\Lambda=\Q[q,t,u_1,u_2,\ldots,u_g, u_1^{-1}, u_2^{-1},\ldots,u_g^{-1}][[T]]\otimes\Sym[[X_1,X_2,\ldots,X_n]]
\]
with $S_{q,t}=-(q-1)(t-1)$. Equivalently, the logarithmic HLV kernel 
\[
\HH_{u_1,u_2,\ldots, u_g}[X_1,X_2,\ldots, X_n;q,t,T]
\]
\[
:=-(q-1)(t-1)\pLog \Omega_{u_1,u_2,\ldots, u_g}[X_1,X_2,\ldots, X_n;q,t,T]
\]
is in $\Lambda$.
\end{thm}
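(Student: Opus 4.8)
The plan is to build $\Omega$ from simpler $S$-admissible operators and then invoke the closure results of Sections 3--4. First I would rewrite the summand of \eqref{eq:omega} in a way that exhibits $\Omega$ as a trace (for the $n=0$ case) or as an iterated modified scalar product against Cauchy kernels. The starting point is the identity \eqref{eq:N}, which expresses $N_\lambda(u)$ as $\left(\tau_{u^{-1}-qt}\tilde H_\lambda,\ \tau_{u-1}\tilde H_\lambda\right)^S$. Since $\tau_a$ is a finite $S$-admissible operator (it is $\tau^*$ composed with a multiplication by an admissible kernel, or directly: its kernel is $\pExp[\tfrac{XY+aY}{S}]$, manifestly admissible), and the modified scalar product of admissible kernels is admissible by Lemma \ref{lem:composition}, each genus factor $N_\lambda(u_i)$ can be produced by contracting two admissible operators evaluated on the Macdonald basis.

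Second, I would encode the whole sum over $\lambda$ using the Tesler-type operators. The key observation is that $\sum_\lambda \tilde H_\lambda[X]\otimes(\cdots)/(\tilde H_\lambda,\tilde H_\lambda)^S$ is exactly the kernel, with respect to the $S$-modified pairing, of the operator that is diagonal in the Macdonald basis with the prescribed eigenvalues. Concretely, the operator $U$ defined by $U\tilde H_\lambda = \left(\prod_{i=1}^n \tilde H_\lambda[X_i]\right)\left(\prod_{i=1}^g N_\lambda(u_i)\right)T^{|\lambda|}\tilde H_\lambda^{\vee}$ — or rather, since we want a symmetric-function valued output in all the $X_i$, the appropriate composition of the operators $\nabla$, $\nabla^{-1}$, $\tau$, $\tau^*$, $\Delta_v$ whose admissibility is Corollary \ref{cor:nabla}, Corollary \ref{cor:nablainv}, and Theorem \ref{thm:admdelta}. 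The factor $T^{|\lambda|}$ is harmless: $T$ is monomial, so multiplication by $T^{|\lambda|}$ on the $\lambda$-graded piece is a degree-counting twist, realized as conjugating by a scaling operator, which preserves $S$-admissibility (it just rescales the variables inside $L_U^S$). The Macdonald polynomials themselves enter via $\tilde H_\lambda[X_i]$ for each puncture $i$; each such factor is produced by one more scalar-product contraction against the reproducing kernel, using that $\sum_\lambda \tilde H_\lambda[X]\tilde H_\lambda[Y]/(\tilde H_\lambda,\tilde H_\lambda)^S = \pExp[XY/S]$ and, more generally, inserting an extra $\tilde H_\lambda[X_i]$ corresponds to precomposing with $\Delta_{p_1[X_i]}$-type operators or simply to an additional contraction variable.

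Third, with $\Omega$ thus expressed as a finite composition/contraction of operators each of which is $S$-admissible over $\Lambda$ (note each building block is defined over $\Q[q,t]$ or $\Q[q,t,q^{-1},t^{-1}]$ or $\Q[q,t,v]$ with the relevant $u_i, T$ monomial, so everything lives over $\Lambda=\Q[q,t,u_1,\ldots,u_g^{-1}][[T]]\otimes\Sym[[X_1,\ldots,X_n]]$ after evaluation), I would apply Theorem \ref{thm:composition} repeatedly and, for the genus-$g$ loops and the $n=0$ case, Theorem \ref{thm:trace}, to conclude that $\Omega$ is $S$-admissible, i.e. $\Omega = \pExp[\HH/S]$ with $\HH\in\Lambda$. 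One has to check the convergence/well-behavedness hypotheses (trace class) at each step: the $T^{|\lambda|}$ insertion guarantees that when $n=0$ the relevant operator is of trace class, since positive degree in $\lambda$ is forced and the coefficients of the $L$'s are well-behaved in the $T$-adic topology; when $n>0$ the positive degree in the $X_i$ plays the same role. Finally, the equivalence with $\HH\in\Lambda$ and with the stated formula $\HH = -(q-1)(t-1)\pLog\Omega$ is just unwinding $\pExp[\HH/S]$ with $S=-(q-1)(t-1)$.

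The main obstacle I expect is the bookkeeping in the second step: writing $\Omega$ honestly as a composition of the named operators, keeping track of which variable ($X_i$, $u_i$, the internal contraction variables, $T$) plays which role, and verifying at each contraction that the operators are composable (one finite or one continuous) and that the trace-class / well-behavedness conditions hold so that Theorems \ref{thm:composition} and \ref{thm:trace} genuinely apply. In particular the genus part requires realizing each $N_\lambda(u_i)$-factor as a \emph{loop} in the operator sense — contracting an operator with itself — which is precisely what $\Tr$ and the directed-graph convolution of Section \ref{sec:convolution2} were set up to handle, but matching \eqref{eq:N} to that framework (the two different shift arguments $u^{-1}-qt$ and $u-1$, and the monomiality of $u$) needs care. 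Once the combinatorial dictionary is fixed, the admissibility conclusion is immediate from the closure theorems, and integrality of the HLV coefficients (Part 1 of the conjectures) follows as the stated corollary.
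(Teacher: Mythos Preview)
Your overall strategy matches the paper's: express $\Omega$ as an iterated composition/trace of $S$-admissible building blocks ($\nabla$, $\nabla^{-1}$, $\tau$, $\tau^*$) and invoke Theorems \ref{thm:composition} and \ref{thm:trace}. The use of \eqref{eq:N} for the genus factors, realized as a trace, is exactly right. However there is one genuine gap and one step that is too vague to work as written.

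\textbf{The missing descent argument.} You cite Corollary \ref{cor:nablainv}, which only gives admissibility of $\nabla^{-1}$ over $\Q[q,t,q^{-1},t^{-1}]$, and then assert that ``everything lives over $\Lambda=\Q[q,t,\ldots]$ after evaluation''. This does not follow: once one of your building blocks is only admissible over the Laurent polynomial ring, the closure theorems only yield $\HH\in\Lambda'=\Q[q,t,q^{-1},t^{-1},u_1^{\pm1},\ldots][[T]]\otimes\Sym[[X_1,\ldots,X_n]]$, not $\HH\in\Lambda$. The paper closes this gap by a separate a priori argument: from the explicit product formula for $(\tilde H_\lambda,\tilde H_\lambda)^S$ one sees that each coefficient of $\Omega$ lies in $\Q(q,t)\cap\Q[q,q^{-1}][[t]]\cap\Q[t,t^{-1}][[q]]$, and since the intersection of this ring with $\Q[q,t,q^{-1},t^{-1}]$ is exactly $\Q[q,t]$, admissibility over $\Lambda'$ upgrades to admissibility over $\Lambda$. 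Without this step your argument proves a weaker statement than the theorem claims.

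\textbf{Producing the puncture factors.} Your description of how the factors $\tilde H_\lambda[X_i]$ arise is not correct as stated. The operator $\Delta_F$ has eigenvalue $F[B_\lambda]$, so ``$\Delta_{p_1[X_i]}$-type operators'' would insert $p_1[X_i]\cdot B_\lambda$-type factors, not $\tilde H_\lambda[X_i]$; and an ``additional contraction variable'' against a Cauchy kernel simply returns another Cauchy kernel, it does not add a third Macdonald factor with the same index $\lambda$. The paper's device is to apply Tesler's operator $\nabla\tau^*\tau$ to get $\pExp[XD_\lambda/S]$, then substitute $X=X_1+\cdots+X_{n+2g}$ so that the exponential factorizes as $\prod_i\pExp[X_iD_\lambda/S]$, and then apply $\tau^*_{-1}\nabla^{-1}$ in each $X_i$ separately (followed by a degree-selection step) to convert each factor back to $\tilde H_\lambda[X_i]$. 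This additive splitting is the mechanism that produces arbitrarily many puncture factors, and it is precisely here that $\nabla^{-1}$ enters and forces the passage through $\Lambda'$.
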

\begin{proof}
Each coefficient of $\Omega$ in the variables $u_i$, $T$ and $X_i$, as a function of $q$ and $t$ belongs to the intersection of the rings\footnote{The intersection is understood inside the ring $\Q(q,t)$}
\[
\Q(q,t) \cap \Q((q))[[t]] \cap \Q((t))[[q]].
\]
This follows from invertibility of the denominator in \eqref{eq:omega}, explicitly given as
\[
(\tilde H_\lambda, \tilde H_\lambda)^{S_{q,t}} = \prod_{s\in \lambda} (q^{a(s)}- t^{l(s)+1}) (q^{a(s)+1}- t^{l(s)} )
\]
in these three rings. Thus it is enough to show that $\Omega$ is admissible over
\[
\Lambda'=\Q[q,t,q^{-1}, t^{-1},u_1,u_2,\ldots,u_g, u_1^{-1}, u_2^{-1},\ldots,u_g^{-1}][[T]]\otimes\Sym[[X_1,X_2,\ldots,X_n]].
\]
Indeed, we have
\[
\Q(q,t) \cap \Q((q))[[t]] \cap \Q((t))[[q]] \cap\Q[q,t,q^{-1},t^{-1}] = \Q[q,t].
\]

The next step is to show how to reconstruct $\Omega$ using the operators $\nabla$, $\nabla^{-1}$, etc, which are $S_{q,t}$-admissible by Corollaries \ref{cor:nabla} and \ref{cor:nablainv}. Begin with the kernel of Tesler's operator $\nabla \tau^*\tau$, which is admissible by Corollary \ref{cor:nabla}. Applying $\nabla \tau^*\tau$ to the Cauchy kernel
\[
\pExp\left[\frac{XY}{S_{q,t}}\right] = \sum_{\lambda\in\cP} \frac{\tilde H_\lambda[X] \tilde H_\lambda[Y]}{(\tilde H_\lambda, \tilde H_\lambda)^{S_{q,t}}}
\]
we obtain
\[
 \sum_{\lambda\in\cP} \frac{\pExp\left[\frac{X D_\lambda}{S_{q,t}}\right] \tilde H_\lambda[Y]}{(\tilde H_\lambda, \tilde H_\lambda)^{S_{q,t}}}.
\]
So this expression is admissible. Now we perform the substitution $X=X_1+X_2+\cdots+X_{n+2g}$, $Y=T$. This does not affect admissibility and the result is
\[
 \sum_{\lambda\in\cP} \frac{\prod_{i=1}^{n+2g} \pExp\left[\frac{X_i D_\lambda}{S_{q,t}}\right]}{(\tilde H_\lambda, \tilde H_\lambda)^{S_{q,t}}} T^{|\lambda|}.
\]
Next we apply $\tau^*_{-1} \nabla^{-1}$. Note that $\nabla^{-1}$ is not admissible over $\Lambda$, but is still admissible over $\Lambda'$. By Lemma \ref{lem:composition} applying $\nabla^{-1}$ does not affect admissibility, and by \eqref{eq:tesler} we obtain
\[
 \sum_{\lambda\in\cP} \frac{\prod_{i=1}^{n+2g} \tilde H_\lambda[X_i+1] }{(\tilde H_\lambda, \tilde H_\lambda)^{S_{q,t}}} T^{|\lambda|}.
\]
Next we throw away all the terms whose degree in any of $X_1$, $X_2$, \ldots is less than the degree in $T$. This produces
\[
 \sum_{\lambda\in\cP} \frac{\prod_{i=1}^{n+2g} \tilde H_\lambda[X_i] }{(\tilde H_\lambda, \tilde H_\lambda)^{S_{q,t}}} T^{|\lambda|},
\]
precisely the kernel for $n+2g$ punctures. Next we apply $\tau_{u_i^{-1}-qt}$ in $X_{n+2i-1}$ and $\tau_{u_i-1}$ in $X_{n+2i}$ for each $i=1,2,\ldots,g$. Then we view the resulting expression as a kernel of an operator with $X=X_{n+2i-1}$, $Y=X_{n+2i}$ for $i=1, 2, \ldots, g$, and take the trace. By Theorem \ref{thm:trace}, the result is still admissible. We obtain
\[
 \sum_{\lambda\in\cP} \frac{\prod_{i=1}^{n} \tilde H_\lambda[X_i]  \prod_{i=1}^g (\tilde H_\lambda[X+u_i^{-1}-qt], \tilde H_\lambda[X +u_i-1])_{X}^{S_{q,t}} }{(\tilde H_\lambda, \tilde H_\lambda)^{S_{q,t}}} T^{|\lambda|},
\]
which by \eqref{eq:N} equals $\Omega_{u_1,u_2,\ldots,u_g}[X_1,X_2,\ldots,X_n]$.
\end{proof}

The coefficients of the expansion of  $\Omega_{u_1,u_2,\ldots,u_g}[X_1,X_2,\ldots,X_n]$ in the monomial basis as functions of $q$ and $t$ are in $\Z((q))[[t]]$, which follows from integrality of Macdonald polynomials. It is well-known that $\pLog$, when computed in the monomial basis has coefficients in $\Z$. This implies that the coefficients of $\HH$ in the monomial basis are in $\Z((q))[[t]]$, so by our result the coefficients are in $\Q[q,t]\cap \Z((q))[[t]]=\Z[q,t]$.

\begin{cor}\label{cor:main}
The coefficients of 
\[
\HH_{u_1,u_2,\ldots, u_g}[X_1,X_2,\ldots, X_n;q,t,T]
\]
in the monomial basis are polynomials in $q, t, u_i, u_i^{-1}$ with integer coefficients.
\end{cor}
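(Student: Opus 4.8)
The plan is to combine the polynomiality just established (the Theorem) with the integrality of the modified Macdonald polynomials and of the plethystic logarithm, along the lines indicated in the paragraph preceding the statement. Fix a tuple $\lambda=(\lambda^{(1)},\dots,\lambda^{(n)})$ of partitions and an integer $d\geq 0$, and write $\Omega_{\lambda,d}$, $\HH_{\lambda,d}$ for the coefficients of the monomial $T^{d}\prod_{i}\prod_{j}X_{ij}^{\lambda^{(i)}_{j}}$ in $\Omega$ and in $\HH=-(q-1)(t-1)\pLog\Omega$, respectively. By the Theorem, $\HH_{\lambda,d}\in\Q[q,t,u_{1}^{\pm},\dots,u_{g}^{\pm}]$; in particular it is a polynomial in $q$ and $t$, and the only thing left to check is that its coefficients are integers.

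First I would pin down where $\Omega_{\lambda,d}$ lives. Only the partitions $\mu$ with $|\mu|=d$ contribute to the $T^{d}$-part of $\Omega$, so $\Omega_{\lambda,d}$ is a finite sum; in the term indexed by $\mu$ the numerator $\prod_{i}\tilde H_{\mu}[X_{i}]\prod_{i}N_{\mu}(u_{i})$ has coefficients in $\Z[q,t,u_{i}^{\pm}]$, by integrality of the modified Macdonald polynomials of \cite{garsia1998explicit} and the explicit product formula for $N_{\mu}$, while the denominator $(\tilde H_{\mu},\tilde H_{\mu})^{S}=\prod_{s\in\mu}(q^{a(s)}-t^{l(s)+1})(q^{a(s)+1}-t^{l(s)})$ has constant term in $t$ equal to a monomial in $q$ times a product of factors $q^{b}-1$ — the latter coming from the cells of leg zero — each of which is in turn a product of cyclotomic polynomials $\Phi_{e}(q)$. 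Hence, after inverting all the $\Phi_{e}(q)$, this constant term becomes a unit, and $\Omega_{\lambda,d}$ lies in $R[[t]]$, where $R=\Z[q,q^{-1},u_{1}^{\pm},\dots,u_{g}^{\pm}]\bigl[\Phi_{e}(q)^{-1}:e\geq 1\bigr]$. This is the one genuinely delicate point: $\Omega$ is \emph{not} a polynomial, so one must fix this localization-and-completion from the outset and work inside it throughout.

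Next I would invoke the standard fact that, computed in the monomial basis, $\pLog$ preserves integrality over any $\lambda$-ring that is torsion-free and whose Adams operations $\psi^{n}$ (here the $p_{n}$) satisfy the Frobenius congruence $\psi^{p}(r)\equiv r^{p}\pmod{p}$. With $q,t$ and the $u_{i}$ taken monomial, $R[[t]]$ is such a ring: $\psi^{n}$ acts by $q\mapsto q^{n}$, $t\mapsto t^{n}$, $u_{i}\mapsto u_{i}^{n}$, it preserves the multiplicative set generated by the $\Phi_{e}(q)$ because $\Phi_{e}(q^{n})$ is again a square-free product of cyclotomic polynomials, and the Frobenius congruence holds by the freshman's dream applied to coefficients. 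Therefore every monomial coefficient of $\pLog\Omega$ lies in $R[[t]]$, and multiplying by $-(q-1)(t-1)$ keeps us in $R[[t]]$, so $\HH_{\lambda,d}\in R[[t]]$. Note that this multiplication is precisely what clears the cyclotomic denominators at the level of $\HH$, in agreement with the Theorem.

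It then remains only to combine the two facts via the elementary identity
\[
\Q[q,t,u_{1}^{\pm},\dots,u_{g}^{\pm}]\cap R[[t]]=\Z[q,t,u_{1}^{\pm},\dots,u_{g}^{\pm}],
\]
which one checks coefficient by coefficient in $t$ and in the $u_{i}$ using Gauss's lemma: a polynomial in $q$ equal to the ratio of an element of $\Z[q,q^{-1}]$ by a product of cyclotomic polynomials (which is a primitive polynomial) already lies in $\Z[q,q^{-1}]$, hence, being a polynomial, in $\Z[q]$. Since $\HH_{\lambda,d}$ lies in the left-hand side by the previous two steps, it lies in $\Z[q,t,u_{i}^{\pm}]$, which is exactly the assertion.
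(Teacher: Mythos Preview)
Your argument is correct and follows exactly the line sketched in the paragraph preceding the Corollary: integrality of $\Omega$ over a suitable coefficient ring, preservation of that ring by $\pLog$, then intersection with the polynomiality coming from the Theorem. You are in fact more careful than the paper at the point you flag as delicate: the paper asserts that the coefficients of $\Omega$ lie in $\Z[q,q^{-1}][[t]]$, but this is not literally true---already for $|\mu|=1$ the denominator $(\tilde H_{(1)},\tilde H_{(1)})^S=(1-t)(q-1)$ is not a unit there---so your cyclotomic localization $R=\Z[q,q^{-1},u_i^{\pm}][\Phi_e(q)^{-1}:e\geq 1]$, together with the verification that $R[[t]]$ is a torsion-free $\lambda$-ring satisfying the Frobenius congruence (hence closed under $\pLog$) and the Gauss-lemma computation $\Q[q,t,u_i^{\pm}]\cap R[[t]]=\Z[q,t,u_i^{\pm}]$, is precisely what is needed to make the argument rigorous.
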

\printbibliography


\end{document}